\newtheorem{theorem}{Theorem}[section]
\newtheorem{lemma}[theorem]{Lemma}
\theoremstyle{definition}
\newtheorem{definition}[theorem]{Definition}
\newtheorem{example}[theorem]{Example}
\newtheorem{proposition}[theorem]{Proposition}
\theoremstyle{remark}
\numberwithin{equation}{section}
\let\angle\measuredangle
\begin{document}

\title{On the Curvature of Metric Triples}
\author{Qinglan Xia}
\address{University of California at Davis\\
Department of Mathematics\\
Davis,CA,95616}
\email{qlxia@math.ucdavis.edu}
\urladdr{http://math.ucdavis.edu/\symbol{126}qlxia}
\subjclass[2010]{Primary: 51F99, 53C23. Secondary: 46B85}
\keywords{Curvature of metric triple, CAT(k) space, Menger curvature, Steiner problem}

\begin{abstract}
In this article we introduce a notion of curvature, denoted by $ k_X(T)$, for a metric triple $T$ inside a (possibly discrete) metric space $X$.  
To define the notion, we employ the information consisting of side lengths of the triple as well as the minimum total distance from vertices of the triple to points of the metric space.  Those information provides us a unique number $k_X(T)$ such that the triple $T$ can be isometrically embedded into the model space $M_k^2$ up to $k\le k_X(T)$.  The value $k_X(T)$ agrees with the usual curvature when $X$ is a convex subset of a model space.  We also show that the curvature $k_X(T)$ of any metric triple $T$ inside a $CAT(k)$ space is bounded above by $k$.
\end{abstract}

\maketitle

\section{Introduction}


The purpose of this article is to introduce a notion of curvature for a metric triple inside a (possibly discrete) metric space.  Our motivation comes from considering the following problem: suppose $X$ is a (possibly finite) subset of an unknown metric space $Y$, how to discover geometric information of $Y$ from those of $X$? People are interested in finding some intrinsic and numerically computable geometric quantities of $X$ that could be used to indicate properties of the unknown ambient space $Y$.  As the rapid development of scientific computation, this problem becomes even more interesting since $X$ may simply be some scientific data collected from experiments or observations.  As an example, we investigate curvature information of $X$ in this article.


As one of the fundamental concepts in geometry, curvature has been studied extensively from those of smooth curves to Riemannian manifolds \cite{Lee}, to geodesic metric spaces (see \cite{BH},\cite{metricbook} and references therein), and beyond (e.g. \cite{forman}, \cite{sullivan}).    
In the literature, most works about curvature assume that the space itself is at least locally path-connected.
In the more general setting when the metric space is not necessarily path-connected, or even simply a finite set, one may consider Menger curvature of metric triples (see \cite{saucan} and references therein). Menger defined the curvature of a triple of points in a metric space as the reciprocal of the radius of the circle in the Euclidean plane which is circumscribed to a comparison configuration associated to that triple. Later Wald considered the curvature of quadruple of points in a metric space as the curvature $k$ of the model surface $M_{k}^{2}$ (i.e., the surface of constant curvature $k$) into which the metric quadruple can be isometrically embedded. 


In the definition of Menger curvature of a triple $T$, one uses a comparable triangle of the triple in the Euclidean space. Nevertheless, since the triple could also be isometrically embedded into other model spaces $M_{k}^{2}$ of constant curvature $k$, one may also consider comparison configurations of the triple in other model surfaces $M_{k}^{2}$ and get analogous concepts of curvature. In this sense, Menger's curvature is thoroughly Euclidean. 

On the other hand, existence of other points in $X$ may prohibit the triple $T$, together with other points, to be isometrically embedded into some $M_{k}^{2}$. This motives us to consider the minimum distance from the triple to points in $X$, and enable us to find a unique number $k_X\left( T\right) $ so that $T$ can only be isometrically embedded into $M_{k}^{2}$ up to $k\leq k_X\left( T\right) $.  

The triple-wise defined curvature $k_X(T)$ has some nice properties. 

\begin{itemize}
\item When $X$ is a convex subset of a model space $M_k^2$,  the value $k_X(T)$  agrees with the pointwise defined curvature $k$ of $X$, for each non-degenerate triple $T$ in $X$. 
\item The value $k_X(T)$ is intrinsically defined. It depends only on the side lengths of the triple as well as the minimum distance of the triple to $X$.
\item Suppose $X$ is a subset of an unknown metric space $Y$, then according to Proposition \ref{prop:subset}, $k_X(T) \ge k_Y(T)$. Adding more data of $Y$ to $X$ will decrease the gap between $k_X(T)$ and $k_Y(T)$, and provide a better approximation.
\item Suppose $X$ is a CAT(k) space. Then, according to Theorem \ref{main_thm}, every metric triple (with bounded perimeter) in $X$ will have curvature $k_X(T)$ bounded above by $k$. This result indicates that one could study properties of  a more general CAT(k)-type space $X$ by assuming that every triple-wise curvature $k_X(T)$ in $X$ is bounded above by $k$. Note that in this general setting, the space $X$ is not necessarily locally path connected.
\end{itemize}

%
%
%
%
%
%

The article is organized as follows. In section $\S 2$, we define the function $S(a,b,c,k)$ by studying the Steiner problem on the model space $M_k^2$. For a metric triple $T$  inside $M_k^2$ of side lengths $a,b,c$, the value $S(a,b,c,k)$ gives the minimum distance from the vertices of the triple to points in $M_k^2$.  In section $\S 3$, we calculate
the values of $S(a,b, c, k)$ numerically. In section $\S 4$, we define the curvature $k_X(T)$ of a metric triple  $T$ in any metric space $X$ using the function $S(a,b,c,k)$. We also investigate properties of $k_X(T)$ afterwards. In section $\S 5$, we show that in a CAT(k) space $X$, any metric triple $T$ (with bounded perimeter)  in $X$  will also have curvature $k_X(T)$ bounded above by $k$.

\section{The Steiner problem on triples in the model surfaces}

For a real number $k$, the model space $M_{k}^{2}$ with distance $\left\vert
\cdot \right\vert _{k}$ is the simply connected surface with constant
curvature $k$. That is, if $k=0$, then $M_{k}^{2}$ is the Euclidean plane.
If $k>0$, then $M_{k}^{2}$ is obtained from the sphere $\mathbb{S}^{2}$ by
multiplying the distance function by the constant $\frac{1}{\sqrt{k}}$. If $%
k<0$, then $M_{k}^{2}$ is obtained from the hyperbolic space $\mathbb{H}^{2}$
by multiplying the distance function by the constant $\frac{1}{\sqrt{-k}}$.
The diameter of $M_{k}^{2}$ is denoted by $D_{k}:=\pi /\sqrt{k}$ for $k>0$
and $D_{k}:=\infty $ for $k\leq 0$.

Suppose $a,b,c\in \left( 0,\infty \right) $ with $a\leq b\leq c\leq a+b$.
For any $k\in (-\infty ,\left( \frac{2\pi }{a+b+c}\right) ^{2}]$, i.e. $%
a+b+c\leq 2D_{k}$, there exists a triangle $\Delta ABC$ in $%
M_{k}^{2}$ with side length $\left\vert BC\right\vert
_{k}=a,\left\vert AC\right\vert _{k}=b$ and $\left\vert
AB\right\vert _{k}=c$. Then, we consider the Steiner problem of
minimizing
\begin{equation}
S\left( a,b,c,k\right) :=\min_{P\in \Delta ABC}\left\{\left\vert PA\right\vert _{k}+\left\vert PB\right\vert_{k}+\left\vert PC\right\vert_{k}\right\}
\label{comparison_function}
\end{equation}%
in the model space $M_{k}^{2}$. The minimum value is denoted by $S\left(
a,b,c,k\right) $, which is independent of the choice of the triangle $\Delta ABC$ in $%
M_{k}^{2}$.

In general, when the side lengths $a,b,c,$ is not necessarily increasingly ordered, one can simply extend the definition of $S$ by requiring $S(a,b,c,k)$ to be a symmetric function of variables $a,b,c$. Nevertheless, in the following context, we will simply assume that $a\le b\le c$.

In the space $M_k^2$, if the angle $\angle ACB \ge \frac{2\pi}{3}$, then the minimum value of $S$ is achieved at the vertex $C$. 
When the angle $\angle ACB < \frac{2\pi}{3}$, the minimum value of $S$ is achieved at an interior point $O$ of the triangle $\Delta ABC$. In this case, the point $O$ is called the Fermat's point of the triangle.
A useful fact about the Fermat's point $O$ is:  the angles $\angle A O B =\angle B O C=\angle C O A=\frac{2\pi}{3}$ in $M_k^2$.

\begin{lemma}
For any $a,b,c\in \left( 0,\infty \right) $ with $a\leq b\leq c\leq a+b$,
and any $k\in (-\infty ,\left( \frac{2\pi }{a+b+c}\right) ^{2}]$, it holds
that 
\begin{equation*}
\frac{a+b+c}{2}\leq S\left( a,b,c,k\right) \leq a+b.
\end{equation*}%
In particular, if $a+b=c$, then for any $k\in (-\infty ,\left( \frac{2\pi }{%
a+b+c}\right) ^{2}]$, 
\begin{equation*}
S\left( a,b,c,k\right) =a+b=c\text{.}
\end{equation*}
\end{lemma}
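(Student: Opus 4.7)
The plan is to establish the two bounds separately and then deduce the degenerate case by pinching.

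For the upper bound $S(a,b,c,k) \le a+b$, I would simply test the candidate point $P = C$. Since $|CA|_k = b$ and $|CB|_k = a$ and $|CC|_k = 0$, the sum of the three distances is $a+b$. Because $C$ lies in the triangle $\Delta ABC$, this is an admissible competitor in the minimization \eqref{comparison_function}, so $S(a,b,c,k) \le a+b$.

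For the lower bound, I would appeal to the triangle inequality in the geodesic metric space $M_k^2$: for every point $P$ (in particular every $P \in \Delta ABC$) one has
\begin{equation*}
|PA|_k + |PB|_k \ge |AB|_k = c, \quad |PB|_k + |PC|_k \ge |BC|_k = a, \quad |PC|_k + |PA|_k \ge |AC|_k = b.
\end{equation*}
Summing these three inequalities yields $2(|PA|_k + |PB|_k + |PC|_k) \ge a+b+c$, hence $|PA|_k + |PB|_k + |PC|_k \ge \tfrac{a+b+c}{2}$ pointwise. Taking the minimum over $P \in \Delta ABC$ gives $S(a,b,c,k) \ge \tfrac{a+b+c}{2}$.

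For the special case $a+b = c$, I would observe that the lower bound becomes $\tfrac{a+b+c}{2} = \tfrac{2c}{2} = c = a+b$, which coincides with the upper bound, forcing $S(a,b,c,k) = a+b = c$.

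There is essentially no obstacle here: the only thing to check is that the triangle inequality is valid in $M_k^2$ for the relevant range of $k$, which holds because $M_k^2$ is a genuine length space (and indeed a geodesic one) for every $k$. The perimeter constraint $a+b+c \le 2D_k$ is exactly what guarantees that the comparison triangle $\Delta ABC$ exists in $M_k^2$, so the minimization domain is non-empty and the argument above applies without modification.
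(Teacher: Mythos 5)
Your proof is correct and matches the paper's argument exactly: the lower bound by summing the three triangle inequalities over pairs of vertices, the upper bound by testing the vertex $C$, and the degenerate case $a+b=c$ by observing the two bounds coincide. Nothing to add.
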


\begin{proof}
Clearly, by the triangle inequality, for each $P\in \Delta ABC$,
\begin{equation*}
2\left\{\left\vert PA\right\vert _{k}+\left\vert PB\right\vert_{k}+\left\vert PC\right\vert_{k}\right\}\geq \left\vert
BC\right\vert _{k}+\left\vert AC\right\vert _{k}+\left\vert
AB\right\vert _{k}\text{.}
\end{equation*}%
Thus, $2S\left( a,b,c,k\right) \geq a+b+c$. Also,
\begin{equation*}
S\left( a,b,c,k\right) \leq \left\vert CA\right\vert _{k}+\left\vert CB\right\vert_{k}=a+b.
\end{equation*}
\end{proof}

The function $S$ has the following properties:

\begin{proposition}
Let $S$ be the function defined by (\ref{comparison_function}). Then, 
\begin{enumerate}
\item For any $\lambda >0$, we have
\begin{equation}
S\left( \lambda a,\lambda b,\lambda c, \frac{k}{\lambda^2}\right) =\lambda S\left(
a,b,c,k\right).
\label{eqn: lambda}
\end{equation}
In particular, 
\begin{equation*}
\left( a+b+c\right) S\left( \frac{a}{a+b+c},\frac{b}{a+b+c},\frac{c}{a+b+c}%
,\left( a+b+c\right) ^{2}k\right) =S\left( a,b,c,k\right).
\end{equation*}
\item If $k>0$, then
\begin{equation}
S\left( a,b,c,k\right) =\frac{1}{\sqrt{k}}S\left( a\sqrt{k},b\sqrt{k},c\sqrt{%
k},1\right), \label{S_positive}
\end{equation}%
and
\begin{equation*}
S\left( \frac{a}{\sqrt{k}},\frac{b}{\sqrt{k}},\frac{c}{\sqrt{k}},k\right) =%
\frac{1}{\sqrt{k}}S\left( a,b,c,1\right) .
\end{equation*}%

\item If $k<0,$ then
\begin{equation}
S\left( a,b,c,k\right) =\frac{1}{\sqrt{-k}}S\left( a\sqrt{-k},b\sqrt{-k},c%
\sqrt{-k},-1\right),  \label{S_negative}
\end{equation}
and
\begin{equation*}
S\left( \frac{a}{\sqrt{-k}},\frac{b}{\sqrt{-k}},\frac{c}{\sqrt{-k}},k\right) =%
\frac{1}{\sqrt{-k}}S\left( a,b,c,1\right) .
\end{equation*}%

\end{enumerate}
\end{proposition}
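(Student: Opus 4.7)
The plan is to deduce everything from part (1), which encodes the basic scaling behavior of the model spaces $M_k^2$. From the definitions given at the start of this section, multiplying the distance function of $M_k^2$ by a factor $\lambda>0$ produces a space isometric to $M_{k/\lambda^2}^2$: for $k>0$ the space $M_k^2$ is already $\mathbb{S}^2$ rescaled by $1/\sqrt{k}$, and composing two rescalings replaces $1/\sqrt{k}$ by $\lambda/\sqrt{k}=1/\sqrt{k/\lambda^2}$; the cases $k<0$ and $k=0$ are analogous, and in every case the admissibility condition $a+b+c\le 2D_k$ transforms correctly into $\lambda(a+b+c)\le 2D_{k/\lambda^2}$.

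To prove part (1), I would fix a triangle $\Delta ABC$ in $M_k^2$ of side lengths $a,b,c$ and let $\Phi:M_k^2\to M_{k/\lambda^2}^2$ denote the resulting bijective rescaling, which satisfies $|\Phi(X)\Phi(Y)|_{k/\lambda^2}=\lambda|XY|_k$ for all $X,Y$. Then $\Phi(\Delta ABC)$ is a triangle in $M_{k/\lambda^2}^2$ with side lengths $\lambda a,\lambda b,\lambda c$, and for every $P\in\Delta ABC$,
\begin{equation*}
|\Phi(P)\Phi(A)|_{k/\lambda^2}+|\Phi(P)\Phi(B)|_{k/\lambda^2}+|\Phi(P)\Phi(C)|_{k/\lambda^2}=\lambda\bigl(|PA|_k+|PB|_k+|PC|_k\bigr).
\end{equation*}
Since $\Phi$ restricts to a bijection between $\Delta ABC$ and $\Phi(\Delta ABC)$, taking the minimum over $P$ on each side yields (\ref{eqn: lambda}). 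The special case then drops out by specializing $\lambda=a+b+c$.

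Parts (2) and (3) follow by suitable choices of $\lambda$ in part (1). For the first identity of (2), when $k>0$, I would set $\lambda=\sqrt{k}$ so that $k/\lambda^2=1$, obtaining (\ref{S_positive}); for the second identity of (2), I would apply part (1) to the triple $(a,b,c)$ at curvature $1$ with $\lambda=1/\sqrt{k}$, so that the target curvature becomes $1/\lambda^2=k$. Part (3) is identical with $\sqrt{-k}$ and $1/\sqrt{-k}$ in place of $\sqrt{k}$ and $1/\sqrt{k}$.

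The main—and really only—obstacle is justifying the rescaling isometry $\Phi$ and checking that it preserves the minimization problem. Since $M_k^2$ is defined in the paper precisely as a metric rescaling of $\mathbb{S}^2$, $\mathbb{H}^2$, or $\mathbb{R}^2$, this justification is an unpacking of definitions; the remainder of the argument is purely mechanical and uses only that a multiplicative constant passes outside an infimum.
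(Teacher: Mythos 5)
Your proposal is correct and is essentially the paper's argument: the paper's proof consists of the single remark that the results "follow from direct calculations using the definition of $S$ as well as the relationships between distance functions of $M_k^2$ and those of $\mathbb{S}^2$ or $\mathbb{H}^2$," and your construction of the rescaling bijection $\Phi$ with $|\Phi(X)\Phi(Y)|_{k/\lambda^2}=\lambda|XY|_k$, followed by the specializations $\lambda=\sqrt{\pm k}$ and $\lambda=1/\sqrt{\pm k}$, is exactly the calculation being alluded to, just written out in full.
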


%

\begin{proof}
These results follow from direct calculations using the definition of $S$ as well as the relationships between distance functions of $M_{k}^{2}$ and those of $\mathbb{S}^{2}$
or $\mathbb{H}^{2}$.
\end{proof}

\begin{definition}
For any $a,b,c\in \left( 0,\infty \right) $ with $a\leq b\leq c\leq a+b$, we consider the number $k^*$ defined as follows:

\begin{itemize}
\item If $c^{2}=a^{2}+b^{2}+ab=a^{2}+b^{2}-2ab\cos \left( \frac{2\pi }{3}\right) $, then $k^*=0$;
\item If $c^{2}<a^{2}+b^{2}+ab$, then
let $k^{\ast }>0$ be the number such that
\begin{equation*}
\cos \left( c\sqrt{k^{\ast }}\right) =\cos \left( a\sqrt{k^{\ast }}\right)
\cos \left( b\sqrt{k^{\ast }}\right) -\frac{1}{2}\sin \left( a\sqrt{k^{\ast }%
}\right) \sin \left( b\sqrt{k^{\ast }}\right);
\end{equation*}%
\item If $c^{2}>a^{2}+b^{2}+ab$,
then let $k^{\ast }<0$ be the number such that
\begin{equation*}
\cosh \left( c\sqrt{-k^{\ast }}\right) =\cosh \left( a\sqrt{-k^{\ast }}%
\right) \cosh \left( b\sqrt{-k^{\ast }}\right) +\frac{1}{2}\sinh \left( a%
\sqrt{-k^{\ast }}\right) \sinh \left( b\sqrt{-k^{\ast }}\right) .
\end{equation*}%
\end{itemize}
We denote the number $k^*$ by $\Lambda \left( a,b,c\right) $.
\end{definition}

%
%
From the definition of $\Lambda(a,b,c)$, it clearly holds that
\[\Lambda(ta,tb,tc)=\frac{1}{t^2}\Lambda(a,b,c)\]
for any $t>0$. As an example, in Figure \ref{figure:lambda}, we plot the graph of $\Lambda(a,b,c)$ with $a=1, b=1.2$ and $c$ varies from $(0.2, 2.2)$.

\begin{figure}[h]
\includegraphics[width=0.6\textwidth]{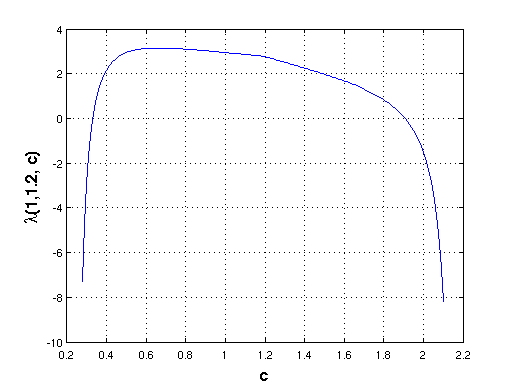}
\caption{Graph of the curve $\Lambda(a,b,c)$ with $a=1, b=1.2$ and $c$ varies from $(0.2, 2.2)$.  }
\label{figure:lambda}
\end{figure}



\section{Calculation of the function $S(a,b,c,k)$}
Let $a, b, c$ be fixed real values with $0<a\le b\le c \le a+b$. For any $k \in (\infty, \left( \frac{2\pi }{a+b+c}\right)^{2}]$, there exists a triangle $\Delta ABC$ in the model space $M_k^2$ with $a,b,c$ as its side lengths. We now consider the properties of $S(a,b,c,k)$ as a function of the variable $k \in (\infty, \left( \frac{2\pi }{a+b+c}\right)^{2}]$.

For any $k \in [\Lambda \left( a,b,c\right), \left( \frac{2\pi }{a+b+c}\right)^{2}] $, by the definition of $\Lambda(a,b,c)$, it follows that $\angle ACB \ge \frac{2\pi}{3}$ in $M_k^2$. In this case, the minimum value of $S$ in (\ref{comparison_function}) is achieved at the point $C$ and thus 
\begin{equation*}
S\left( a,b,c,k\right) =a+b.
\end{equation*}

For any $k\in (\infty, \Lambda \left( a,b,c\right))$, the angle $\angle ACB < \frac{2\pi}{3}$ in $M_k^2$, and the minimum value of $S$ in  (\ref{comparison_function}) is achieved at the Fermat point $O$ in the interior of the triangle $\Delta ABC$. In this case,
\begin{equation}
S(a,b,c,k)=x+y+z,
\label{eqn_xyz}
\end{equation}
where $x=\left\vert OA\right\vert _{k}, y=\left\vert OB\right\vert _{k}$ and $z=\left\vert OC\right\vert _{k}$. Note that the angles $\angle A O B =\angle B O C=\angle C O A=\frac{2\pi}{3}$ in $M_k^2$.

We now calculate the value of $S(a,b,c,k)$ in (\ref{eqn_xyz}) based on the signs of $k$.
\subsection{Calculation of $S(a,b,c,0)$} When $\Lambda \left( a,b,c\right)>0$, i.e.,  when $a^2+b^2+ab>c^2$, one can explicitly calculate the value of $S(a,b,c,0)$ as follows. 

In this case, $\Delta ABC$ is a triangle in the Euclidean plane. At the Fermat's point $O$, by the law of cosines, it holds that
\begin{eqnarray}
a^2 &=&y^2+z^2+yz, \\
b^2&=&x^2+z^2+xz,\\
c^2&=&x^2+y^2+xy. 
\end{eqnarray}
On the other hand, when calculate the area $\Delta$ of the triangle, we have
\[\frac{1}{2}xy\sin(2\pi/3)+\frac{1}{2}yz\sin(2\pi/3)+\frac{1}{2}xz\sin(2\pi/3)=\Delta \]
Thus, $xy+yz+zx=\frac{4\Delta}{\sqrt{3}}$.
As a result,
$2(x+y+z)^2=a^2+b^2+c^2+4\sqrt{3}\Delta$.
Therefore,
\[S(a,b,c,0)=x+y+z=\sqrt{\frac{a^2+b^2+c^2+4\sqrt{3}\Delta}{2}},\]
where the area $\Delta$ can be calculated by Heron's formula
\[\Delta=\sqrt{s(s-a)(s-b)(s-c)},\]
with $s=\frac{a+b+c}{2}$.

\subsection{Calculation of $S(a,b,c,k )$ with $k>0$}
To calculate $S\left( a,b,c,k\right) $ with $k>0$, by (\ref{S_positive}) and (\ref%
{S_negative}), it is sufficient to calculate $S\left( a,b,c,1\right) $.

In this case, $\Delta ABC$ is  a triangle in the unit sphere $\mathbb{S}^2$. When $\Lambda(a,b,c)>1$, i.e., when $\cos c > \cos a\cos b-\frac{1}{2}\sin a\sin b $, $S$ achieves its minimum at the Fermat's point $O$. 
%
%
%
Since the angles at $O$ are all $\frac{2\pi}{3}$, according to the spherical law of cosines, the values $x,y, z$ are giving by solving the following system of trigonometric equations:
\begin{eqnarray}
\cos a &=&\cos y\cos z-\frac{1}{2}\sin y\sin z,  \label{cos_a} \\
\cos b &=&\cos x\cos z-\frac{1}{2}\sin x\sin z , \label{cos_b} \\
\cos c &=&\cos x\cos y-\frac{1}{2}\sin x\sin y . \label{cos_c}
\end{eqnarray}%

\begin{proposition} \label{spherical}
For the $x, y, z,$ and $a, b, c$ given as above, let $X=\sin x, Y=\sin y, Z=\sin z$ and $u=\cos a, v=\cos b, w=\cos c$. Then, $X, Y, Z$
satisfy the following system of multivariate polynomial equations:
\begin{eqnarray}
&&\left( 3u^2+1\right) X^2+\left( 3v^2+1\right) Y^2 +\left( 6uv-2w\right)XY-3X^2Y^2+4w^2=D, \label{alg_c}\\
&&\left( 3u^2+1\right) X^2+\left( 3w^2+1\right) Z^2 +\left( 6uw-2v\right)XZ-3X^2Z^2+4v^2 =D, \label{alg_b}\\
&&\left( 3v^2+1\right) Y^2+\left( 3w^2+1\right) Z^2 +\left( 6vw-2u\right)YZ-3Y^2Z^2+4u^2=D, \label{alg_a}
\end{eqnarray}%
where $D=4u^2+4v^2+4w^2-8uvw $.
\end{proposition}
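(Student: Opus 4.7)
The plan is to eliminate the variable $z$ from the three spherical cosine equations (\ref{cos_a}), (\ref{cos_b}), (\ref{cos_c}) to produce a polynomial relation in $X, Y, u, v, w$ alone, which is (\ref{alg_c}). Equations (\ref{alg_a}) and (\ref{alg_b}) will then follow from the cyclic symmetry of the setup under simultaneous permutation of $(x,y,z)$ with $(a,b,c)$, so I concentrate on deriving (\ref{alg_c}).

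Only (\ref{cos_a}) and (\ref{cos_b}) involve $z$, and I would treat these as a linear system in the two unknowns $\cos z$ and $\sin z = Z$, with coefficients $(\cos y, -\tfrac12 Y)$ and $(\cos x, -\tfrac12 X)$ respectively. Cramer's rule delivers
\[
\cos z = \frac{vY - uX}{Y\cos x - X\cos y}, \qquad \sin z = \frac{2(v\cos y - u\cos x)}{Y\cos x - X\cos y}.
\]
Imposing $\cos^2 z + \sin^2 z = 1$ and clearing denominators yields
\[
(vY - uX)^2 + 4(v\cos y - u\cos x)^2 = (Y\cos x - X\cos y)^2.
\]
After substituting $\cos^2 x = 1 - X^2$ and $\cos^2 y = 1 - Y^2$ and expanding, every appearance of $\cos x$ and $\cos y$ cancels except for a single surviving product $\cos x \cos y$. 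This last trigonometric term is then eliminated using (\ref{cos_c}), which rearranges to $\cos x \cos y = w + \tfrac12 XY$.

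The final step is the collection of like terms. After multiplying by $-1$ and adding $4w^2$ to both sides, the left-hand side assembles into $(3u^2+1)X^2 + (3v^2+1)Y^2 + (6uv-2w)XY - 3X^2Y^2 + 4w^2$ and the right-hand side into $4u^2 + 4v^2 + 4w^2 - 8uvw = D$, which is exactly (\ref{alg_c}). The main obstacle is purely bookkeeping: roughly a dozen signed polynomial terms must combine correctly so that the $X^2Y^2$ coefficient collapses to $-3$ and the $XY$ coefficient to $6uv - 2w$. No new geometric idea is needed beyond the Pythagorean identity for $z$ and the three spherical laws of cosines at the Fermat point.
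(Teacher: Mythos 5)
Your proposal is correct and follows essentially the same route as the paper: solving (\ref{cos_a})--(\ref{cos_b}) for $\cos z$ and $\sin z$ by Cramer's rule and imposing $\cos^2 z+\sin^2 z=1$ is exactly the paper's step of forming the two linear combinations $\cos a\cos x-\cos b\cos y=\tfrac{\sin z}{2}\sin(x-y)$ and $\cos a\sin x-\cos b\sin y=\cos z\sin(x-y)$ and summing their squares, after which both arguments eliminate the surviving $\cos x\cos y$ via (\ref{cos_c}) and collect terms identically. The bookkeeping you describe does close up to give (\ref{alg_c}), and the symmetry argument for the other two equations matches the paper.
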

\begin{proof}
By symmetry, it is sufficient to prove the equation (\ref{alg_c}). Indeed, from (\ref{cos_a}) and (\ref{cos_b}), we have
\[
\cos a\cos x-\cos b\cos y=\frac{\sin z}{2}\sin \left( x-y\right)\]
and
\[
\cos a\sin x-\cos b\sin y=\sin \left( x-y\right) \cos z.\]

These two equations as well as (\ref{cos_c}) give 
\begin{eqnarray*}
& &\sin ^{2}\left( x-y\right)=\sin ^{2}\left( x-y\right)(\sin^2 z+\cos^ 2 z)\\
&=&4\left( \cos a\cos x-\cos b\cos y\right)
^{2}+\left( \cos a\sin x-\cos b\sin y\right) ^{2}\\
&=&4\cos ^{2}a\cos ^{2}x+4\cos ^{2}b\cos
^{2}y-8\cos a\cos b\cos x\cos y \\
&&+\cos ^{2}a\sin ^{2}x+\cos ^{2}b\sin ^{2}y-2\cos a\cos b\sin x\sin y \\
&=& 4\cos^2 a -3\cos^2a \sin^2 x +4\cos^2b -3\cos^2 b \sin^2y\\
& & -8\cos a \cos b (\cos c+\frac{1}{2}\sin x \sin y)-2\cos a\cos b \sin x \sin y\\
&=& 4\cos^2 a +4\cos^2b-3\cos^2a \sin^2 x  -3\cos^2 b \sin^2y\\
& &-8\cos a \cos b \cos c-6\cos a\cos b \sin x \sin y.
\end{eqnarray*}%

On the other hand, by using (\ref{cos_c}) again, we have
\begin{eqnarray*}
\sin ^{2}\left( x-y\right) &=&\left( \sin x\cos y-\cos x\sin y\right)
^{2}\\
&=&\sin ^{2}x\cos ^{2}y+\cos ^{2}x\sin ^{2}y-2\sin x\sin y\cos x\cos y \\
&=&\sin ^{2}x-2\sin ^{2}x\sin ^{2}y+\sin ^{2}y-2\sin x\sin y\left( \cos c+%
\frac{1}{2}\sin x\sin y\right) \\
&=&\sin ^{2}x-3\sin ^{2}x\sin ^{2}y+\sin ^{2}y-2\sin x\sin y\cos c.
\end{eqnarray*}%
Therefore, it follows that 
\begin{eqnarray*}
& &\sin ^{2}x-3\sin ^{2}x\sin ^{2}y+\sin ^{2}y-2\sin x\sin y\cos c\\
&=&4\cos^2 a +4\cos^2b-3\cos^2a \sin^2 x  -3\cos^2 b \sin^2y \\
& &-8\cos a \cos b \cos c-6\cos a\cos b \sin x \sin y.
\end{eqnarray*}
That is,
\begin{eqnarray*}
& &(1+3\cos^2a)\sin ^{2}x-3\sin ^{2}x\sin ^{2}y +(1+3\cos^2 b)\sin ^{2}y \\
& &+(6\cos a\cos b-2\cos c)\sin x\sin y\\
&=&4\cos^2 a +4\cos^2b-8\cos a \cos b \cos c,
\end{eqnarray*}
which is the equation (\ref{alg_c}).
\end{proof}

From either equations (\ref{cos_a}), (\ref{cos_b}), (\ref{cos_c}) or equations (\ref{alg_c}), (\ref{alg_b}), (\ref{alg_a}), one can calculate the numerical values of $S(a,b,c,1)$ via numerical methods
e.g. Newton's method. In Figure \ref{figure: S1}, we plot the graph of the parametric surface $S(a,b,c,1)$ with $a=1, b=(t-s)/2, c=(t+s)/2)$ for $t\in (a, 2\pi-a)$ and $s\in (-a, a)$.
\begin{figure}[t]
\includegraphics[width=0.8\textwidth]{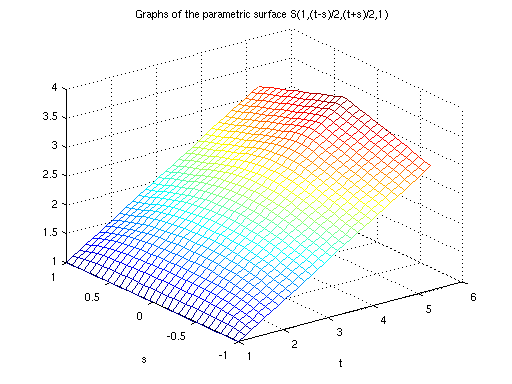}
\caption{Graph of the parametric surface $S(a,b,c,1)$ with $a=1, b=(t-s)/2, c=(t+s)/2)$ for $t\in (a, 2\pi-a)$ and $s\in (-a, a)$. }
\label{figure: S1}
\end{figure}

\subsection{Calculation of $S(a,b,c,-1)$}
Similarly, by the hyperbolic law of cosines, it follows that 
\begin{eqnarray}
\cosh a &=&\cosh y\cosh z+\frac{1}{2}\sinh y\sinh z  \label{cosh_a} \\
\cosh b &=&\cosh x\cosh z+\frac{1}{2}\sinh x\sinh z  \label{cosh_b} \\
\cosh c &=&\cosh x\cosh y+\frac{1}{2}\sinh x\sinh y  \label{cosh_c}
\end{eqnarray}%
for $x,y,z$ and then
$S\left( a,b,c,-1\right) =x+y+z.$

\begin{proposition}
For the $x, y, z,$ and $a, b, c$ given as above, let $X=\sinh x, Y=\sinh y, Z=\sinh z$ and $u=\cosh a, v=\cosh b, w=\cosh c$. Then, $X, Y, Z$
satisfy the following system of multivariate polynomial equations:
\begin{eqnarray}
&&\left( 3u^2+1\right) X^2+\left( 3v^2+1\right) Y^2 +\left( 6uv-2w\right)XY+3X^2Y^2-4w^2+D=0, \label{algh_c}\\
&&\left( 3u^2+1\right) X^2+\left( 3w^2+1\right) Z^2 +\left( 6uw-2v\right)XZ+3X^2Z^2-4v^2 +D=0, \label{algh_b}\\
&&\left( 3v^2+1\right) Y^2+\left( 3w^2+1\right) Z^2 +\left( 6vw-2u\right)YZ+3Y^2Z^2-4u^2+D=0, \label{algh_a}
\end{eqnarray}%
where $D=4u^2+4v^2+4w^2-8uvw $.
\end{proposition}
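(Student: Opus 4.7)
The proof closely parallels the spherical argument of Proposition \ref{spherical}, with the hyperbolic law of cosines and the identities $\cosh^2-\sinh^2=1$ and $\sinh(x-y)=\sinh x\cosh y-\cosh x\sinh y$ replacing their circular counterparts. By the symmetric roles of the three sides, it suffices to establish (\ref{algh_c}); the remaining two equations follow by cyclic relabeling of $(a,b,c)$ and $(x,y,z)$.

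First, starting from (\ref{cosh_a}) and (\ref{cosh_b}), I would form the two linear combinations $\cosh a\cdot\cosh x-\cosh b\cdot\cosh y$ and $\cosh a\cdot\sinh x-\cosh b\cdot\sinh y$. Direct substitution cancels the $\cosh x\cosh y$ terms, and the remainders factor through $\sinh(x-y)$, yielding
\[
\cosh a\cosh x-\cosh b\cosh y = -\tfrac{1}{2}\sinh z\,\sinh(x-y),
\]
\[
\cosh a\sinh x-\cosh b\sinh y = \cosh z\,\sinh(x-y).
\]
Because $\cosh^2 z-\sinh^2 z=1$, I can then eliminate $z$ by combining these as
\[
\sinh^2(x-y) = (\cosh a\sinh x-\cosh b\sinh y)^2 - 4(\cosh a\cosh x-\cosh b\cosh y)^2.
\]

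Next, I would expand this right-hand side using $\cosh^2=1+\sinh^2$, and replace the cross term $-8\cosh a\cosh b\cosh x\cosh y$ using (\ref{cosh_c}) in the form $\cosh x\cosh y=\cosh c-\tfrac{1}{2}\sinh x\sinh y$. In parallel, I would expand $\sinh^2(x-y)=(\sinh x\cosh y-\cosh x\sinh y)^2$ directly, again applying $\cosh^2=1+\sinh^2$ and (\ref{cosh_c}) to eliminate the remaining $\cosh x\cosh y$ factor. Equating these two polynomial expressions in $X=\sinh x$, $Y=\sinh y$, $u=\cosh a$, $v=\cosh b$, $w=\cosh c$, and collecting terms yields
\[
(3u^2+1)X^2+(3v^2+1)Y^2+(6uv-2w)XY+3X^2Y^2+4u^2+4v^2-8uvw=0,
\]
which is precisely (\ref{algh_c}) once one observes $4u^2+4v^2-8uvw=-4w^2+D$.

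The only substantive difference from the spherical proof lies in the sign bookkeeping. The minus sign in $\cosh^2 z-\sinh^2 z=1$ forces a subtraction when eliminating $z$, and the $+\tfrac{1}{2}\sinh y\sinh z$ that replaces $-\tfrac{1}{2}\sin y\sin z$ (since $\cos(2\pi/3)=-\tfrac{1}{2}$ contributes with opposite sign in the hyperbolic law of cosines) propagates throughout the algebra. These sign flips are exactly what turn $-3X^2Y^2$ into $+3X^2Y^2$ and $+4w^2$ into $-4w^2$ relative to the spherical equations. I anticipate no genuine obstacle beyond carefully tracking these signs through the expansion.
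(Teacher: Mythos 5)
Your proposal is correct and is exactly the argument the paper intends: the paper's proof of this proposition simply says it follows from an analogous computation to Proposition \ref{spherical}, and your derivation (eliminating $z$ via $\cosh^2 z-\sinh^2 z=1$ applied to $\cosh a\cosh x-\cosh b\cosh y$ and $\cosh a\sinh x-\cosh b\sinh y$, then substituting (\ref{cosh_c}) and equating with the direct expansion of $\sinh^2(x-y)$) is that analogue, with the sign bookkeeping correctly tracked. The algebra checks out, including the identity $4u^2+4v^2-8uvw=-4w^2+D$.
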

\begin{proof}
Follows from an analogous proof of Proposition \ref{spherical}.
\end{proof}
Again, from either equations (\ref{cosh_a}), (\ref{cosh_b}), (\ref{cosh_c}) or equations (\ref{algh_c}), (\ref{algh_b}), (\ref{algh_a}), one can calculate the numerical values of $S(a,b,c,-1)$ via Newton's method.

As a result, one can (numerically) calculate the values of $S(a,b,c,k)$ for all $k\in (-\infty, \Lambda(a,b,c)]$. In Figure \ref{figure:S}, we plot of the graph of $S(a,b,c,k)$  with $a=1, b=1.2$ and $c=1.3$. Note that $S(a,b,c,k)$ is a continuous strictly increasing function of $k\in (-\infty, \Lambda(a,b,c)]$.

\begin{figure}[h]
\includegraphics[width=0.6\textwidth]{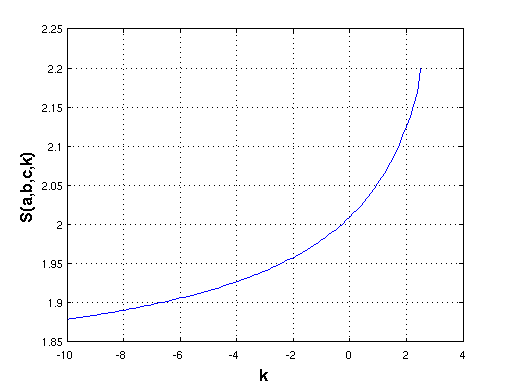}
\caption{Graph of the strictly increasing curve $S(a,b,c,k)$ with $a=1, b=1.2, c=1.3$ and the variable $k$ varies from $(-\infty, \Lambda(a,b,c)]$ with $\Lambda(a,b,c)=2.5081$ here. }
\label{figure:S}
\end{figure}
\section{Curvature of a metric triple}


\begin{definition}
Let $\left( X,d\right) $ be a metric space. A metric triple is a set $%
T=\left\{ p_{1},p_{2},p_{3}\right\} \subseteq X$, together with a set of
mutual distance $d_{ij}=d\left( p_{i},p_{j}\right) ;$ $1\leq i\leq j\leq 3$.
\end{definition}
Without losing of generality, we may assume that $d_{12}\le d_{23} \le d_{13}\le d_{12}+d_{23}$.
\begin{definition}
\bigskip For any metric triple $T$ in $(X,d)$, define
\begin{equation*}
g_X\left( T\right) :=\inf_{x\in X}\sum_{i=1}^{3}d\left(
p_{i},x\right) .
\end{equation*}
\end{definition}
Note that $g_X(T)\ge \frac{d_{12}+d_{23}+d_{13}}{2}$.
\begin{definition}
Let $T$ be a metric triple in a metric space $\left( X,d\right) $, and $S$ be the
function as defined in (\ref{comparison_function}). The curvature $%
k_{X}\left( T\right) $ of the metric triple $T$ with respect to $X$ is the
number $k\in \left[ -\infty ,\Lambda \left( d_{12},d_{23},d_{13}\right) %
\right] $ such that \[g_X\left( T\right) =S\left( d_{12},d_{23},d_{13},k\right).\]
\end{definition}

\begin{example}
Suppose $\left( X,d\right) $ is a convex subset in the model space $M_{k}^{2}$ for some real
number $k$, and $T$ is a metric triple in $X$. Then $g_X(T)=S(d_{12}, d_{23}, d_{31}, k)$. Thus, $k_X(T)\le k$. Moreover, if $k\le \Lambda(d_{12}, d_{23}, d_{31})$, i.e. the angle at each vertex of the corresponding triangle is no more than $2\pi/3$, then $k_X(T)=k$.
\end{example}
\begin{example}
 Let $T$ be a metric triple in a discrete metric space $(X,d)$ with $d\left( x,y\right)=1$ whenever $x\neq y$. Then, $g_X(T)=2$. In this case,
the curvature $k_X(T)=\Lambda(1,1,1)=3.6505=\left( \pi -\arccos \frac{1}{3}\right) ^{2}$. The comparable triangle in the model space $M_{k_X(T)}^2$ has an angle $2\pi/3$ at each of its vertices.
\end{example}
According to (\ref{eqn: lambda}), we immediately have the following proposition:
\begin{proposition}
Let $T=\{p_1,p_2,p_3\}$ be a metric triple in the metric space $(X, d_X)$, and $\tilde{T}=\{\tilde{p}_1, \tilde{p}_2, \tilde{p}_3\}$ be a metric triple in the metric space $(Y, d_Y)$. If for some $\lambda>0$, $d_Y(\tilde{p}_i, \tilde{p}_j)=\lambda d_X(p_i,p_j)$ for each $i,j =1,2,3$ and $g_Y(\tilde{T})=\lambda g_X(T)$, then $k_Y(\tilde{T})=\frac{1}{\lambda^2}k_X(T)$.
\end{proposition}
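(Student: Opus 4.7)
The plan is to unwind the defining equation $g_X(T)=S(d_{12},d_{23},d_{13},k_X(T))$ on both sides of the putative identity and reduce everything to the scaling formula (\ref{eqn: lambda}) together with the strict monotonicity of $S(a,b,c,\cdot)$ in $k$.

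First I would set $a=d_X(p_1,p_2)$, $b=d_X(p_2,p_3)$, $c=d_X(p_1,p_3)$, and write $k:=k_X(T)$, so that by definition
\begin{equation*}
g_X(T)=S(a,b,c,k).
\end{equation*}
The hypothesis tells us that $\tilde T$ has side lengths $(\lambda a,\lambda b,\lambda c)$ and that $g_Y(\tilde T)=\lambda g_X(T)$, so by definition
\begin{equation*}
g_Y(\tilde T)=S\bigl(\lambda a,\lambda b,\lambda c,k_Y(\tilde T)\bigr).
\end{equation*}

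Next I would multiply the first equation by $\lambda$ and invoke the scaling identity (\ref{eqn: lambda}) to obtain
\begin{equation*}
\lambda S(a,b,c,k)=S\!\left(\lambda a,\lambda b,\lambda c,\tfrac{k}{\lambda^{2}}\right).
\end{equation*}
Combining this with $g_Y(\tilde T)=\lambda g_X(T)$ yields
\begin{equation*}
S\!\left(\lambda a,\lambda b,\lambda c,\tfrac{k}{\lambda^{2}}\right)=S\bigl(\lambda a,\lambda b,\lambda c,k_Y(\tilde T)\bigr).
\end{equation*}

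At this point I need to know that the two values of $k$ appearing on the two sides must coincide. This is where strict monotonicity of $S(\lambda a,\lambda b,\lambda c,\cdot)$ as a function of $k$ (observed after the discussion of Figure \ref{figure:S}, and in fact forced by the geometric interpretation of $S$ as the Steiner value on $M_k^2$) is the essential input. I should also check admissibility: since $\Lambda(\lambda a,\lambda b,\lambda c)=\Lambda(a,b,c)/\lambda^{2}$ (a scaling already noted after the definition of $\Lambda$) and $k\le\Lambda(a,b,c)$, the value $k/\lambda^{2}$ lies in the legal domain for $S(\lambda a,\lambda b,\lambda c,\cdot)$, so the injectivity argument applies and gives $k_Y(\tilde T)=k/\lambda^{2}=k_X(T)/\lambda^{2}$.

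The only genuinely non-bookkeeping step is the strict monotonicity of $S$ in $k$; everything else is a direct substitution using the already-proved scaling formula. Since the proposition is prefaced "According to (\ref{eqn: lambda})," I expect the author's proof to consist of essentially the two-line computation above, tacitly using monotonicity.
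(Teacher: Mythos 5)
Your proof is correct and matches the paper's intent exactly: the paper offers no written proof beyond the phrase ``According to (\ref{eqn: lambda}), we immediately have the following proposition,'' and your two-line computation using the scaling identity plus strict monotonicity of $S$ in $k$ is precisely the argument being left implicit. Your additional check that $k/\lambda^2$ lies in the admissible range via $\Lambda(\lambda a,\lambda b,\lambda c)=\Lambda(a,b,c)/\lambda^2$ is a careful touch the paper omits.
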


\begin{proposition}
\label{prop:subset}
Suppose $(X,d_X)$ can be isometrically embedded in $(Y,d_Y)$. Then, for any metric triple $T$ in $(X,d_X)$, we have $k_Y(T)\le k_X(T)$.
\end{proposition}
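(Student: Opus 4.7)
The plan is to reduce the inequality to the strict monotonicity of $k\mapsto S(d_{12},d_{23},d_{13},k)$ recorded after Figure~\ref{figure:S}, combined with the elementary observation that enlarging the ambient space can only decrease the infimum in the definition of $g$. The proof will be short; the real content is simply that these two ingredients fit together cleanly.

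First I would record that, because $\iota\colon X\hookrightarrow Y$ is an isometric embedding, the three pairwise distances $d_{12},d_{23},d_{13}$ of the triple $T=\{p_1,p_2,p_3\}$ are the same whether measured in $X$ or in $Y$. Consequently the threshold $\Lambda(d_{12},d_{23},d_{13})$ and the function $S(d_{12},d_{23},d_{13},\cdot)$ used to define both $k_X(T)$ and $k_Y(T)$ are identical; only the scalars $g_X(T)$ and $g_Y(T)$ can differ. Next I would verify that $g_Y(T)\le g_X(T)$: identifying $X$ with $\iota(X)\subseteq Y$, the infimum defining
\[
g_Y(T)=\inf_{y\in Y}\sum_{i=1}^{3}d_Y(\iota(p_i),y)
\]
is taken over a superset of $\iota(X)$, and restricting it to $\iota(X)$ recovers $g_X(T)$ by isometry, giving the inequality.

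Finally, since $g_X(T)=S(d_{12},d_{23},d_{13},k_X(T))$ and $g_Y(T)=S(d_{12},d_{23},d_{13},k_Y(T))$ by the very definition of the triple curvature, strict monotonicity of $S$ in its last argument on $(-\infty,\Lambda(d_{12},d_{23},d_{13})]$ upgrades $g_Y(T)\le g_X(T)$ directly to $k_Y(T)\le k_X(T)$. The only delicate point I anticipate is the endpoint case: if $g_X(T)$ equals the upper bound $d_{12}+d_{23}$ from Lemma~2.1, then $k_X(T)=\Lambda(d_{12},d_{23},d_{13})$, and the conclusion follows automatically because $k_Y(T)$ cannot exceed $\Lambda$ by definition. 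At the opposite extreme, if $g_Y(T)$ is below the range of $S(d_{12},d_{23},d_{13},\cdot)$, one sets $k_Y(T)=-\infty$ and the inequality is again trivial.
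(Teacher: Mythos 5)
Your proposal is correct and follows exactly the paper's own (one-line) argument: $g_Y(T)\le g_X(T)$ since the infimum in $Y$ runs over a superset of the isometric image of $X$, and strict monotonicity of $S(d_{12},d_{23},d_{13},\cdot)$ in $k$ then forces $k_Y(T)\le k_X(T)$. The extra care you take with the endpoint $k=\Lambda(d_{12},d_{23},d_{13})$ and the $-\infty$ convention is a welcome elaboration but not a different method.
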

\begin{proof}
This is because $g_Y(T)\le g_X(T)$ and $S(d_{12}, d_{23}, d_{31}, k)$ is an strictly increasing function of $k$.
\end{proof}

\begin{lemma}
Let $T=\{p_1, p_2, p_3\}$ and $\tilde{T}=\{\tilde{p}_1, \tilde{p}_2, \tilde{p}_3\}$ be two metric triples in a metric space $(X,d)$. Then,
\begin{equation}
|g_X(T)-g_X(\tilde{T})|\le \sum_{i=1}^3 d(p_i, \tilde{p}_i). \label{eqn: g_X}
\end{equation}
\end{lemma}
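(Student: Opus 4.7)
The plan is to prove both of the two inequalities that the absolute value hides, namely $g_X(\tilde T) - g_X(T) \le \sum_{i=1}^3 d(p_i, \tilde p_i)$ and the symmetric one, each by comparing the sum $\sum_i d(\tilde p_i, x)$ to $\sum_i d(p_i, x)$ at the same auxiliary point $x$ via the triangle inequality.

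The key pointwise step is the following: for every $x \in X$, the triangle inequality applied coordinate-by-coordinate gives
\begin{equation*}
\sum_{i=1}^3 d(\tilde p_i, x) \le \sum_{i=1}^3 \bigl[d(p_i, x) + d(\tilde p_i, p_i)\bigr] = \sum_{i=1}^3 d(p_i, x) + \sum_{i=1}^3 d(p_i, \tilde p_i).
\end{equation*}
Then, because $g_X$ is defined as an infimum rather than a minimum (the ambient space $X$ may fail to be complete or even path-connected, so no minimizer need exist), I would pass to the infimum using an $\varepsilon$-argument: given $\varepsilon > 0$, choose $x_\varepsilon \in X$ with $\sum_i d(p_i, x_\varepsilon) \le g_X(T) + \varepsilon$; apply the pointwise inequality at $x_\varepsilon$ to obtain $g_X(\tilde T) \le \sum_i d(\tilde p_i, x_\varepsilon) \le g_X(T) + \varepsilon + \sum_i d(p_i, \tilde p_i)$; and let $\varepsilon \to 0$.

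Exchanging the roles of $T$ and $\tilde T$ gives the reverse inequality, and combining the two produces $|g_X(T) - g_X(\tilde T)| \le \sum_{i=1}^3 d(p_i, \tilde p_i)$. There is no real obstacle here; the only point to be slightly careful about is not to assume the infimum in the definition of $g_X$ is attained, which is why I pass through the $\varepsilon$-approximation rather than directly evaluating at a minimizer.
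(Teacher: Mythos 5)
Your proof is correct and follows essentially the same route as the paper: the pointwise triangle inequality $\sum_i d(x,\tilde p_i)\le \sum_i d(x,p_i)+\sum_i d(p_i,\tilde p_i)$, followed by passing to the infimum over $x$ and symmetrizing. Your extra $\varepsilon$-approximation is just a careful spelling-out of the paper's "taking the infimum on both sides," so there is no substantive difference.
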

\begin{proof}
For all $x\in X$, by the triangle inequality,
\[\sum_{i=1}^3 d(x, p_i) \le \sum_{i=1}^3 d(x, \tilde{p}_i)+\sum_{i=1}^3 d(p_i, \tilde{p}_i).\]
Taking the infimum over $x\in X$ on both sides of the inequality, we have $g_X(T)\le g_X(\tilde{T})+\sum_{i=1}^3 d(p_i, \tilde{p}_i)$. Similarly, $g_X(\tilde{T}) \le g_X(T)+\sum_{i=1}^3 d(p_i, \tilde{p}_i)$.
\end{proof}

\begin{proposition}
Let $T^{(n)}=\{p^{(n)}_1, p^{(n)}_2, p^{(n)}_3\}$ be a sequence of metric triples in a metric space $(X,d)$. If $T^{(n)}$ converges to a metric triple $T=\{p_1, p_2, p_3\}$ in $(X,d)$ in the sense that for each $i=1,2,3$,  $d(p^{(n)}_i, p_i)\rightarrow 0$ as $n\rightarrow \infty$, then 
\[\lim_{n\rightarrow \infty} k_X(T^{(n)})=k_X(T).\]
\end{proposition}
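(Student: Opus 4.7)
The plan is to combine the 1-Lipschitz dependence of $g_X$ on the vertices of the triple (inequality~\eqref{eqn: g_X}) with the continuity and strict monotonicity in $k$ of the comparison function $S$ noted after Figure~\ref{figure:S}. The overall scheme is: push the data $(d_{12}^{(n)}, d_{23}^{(n)}, d_{13}^{(n)}, g_X(T^{(n)}))$ to its limit, and then invert $S$ in its last argument to recover the curvature.

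First I would show that the side-length and minimum-distance data converge. From the triangle inequality, $|d_{ij}^{(n)} - d_{ij}| \le d(p_i^{(n)}, p_i) + d(p_j^{(n)}, p_j) \to 0$, so $d_{ij}^{(n)} \to d_{ij}$ for each pair. The previous lemma then gives $|g_X(T^{(n)}) - g_X(T)| \le \sum_{i=1}^3 d(p_i^{(n)}, p_i) \to 0$. So in $\mathbb{R}^4$ we have $(d_{12}^{(n)}, d_{23}^{(n)}, d_{13}^{(n)}, g_X(T^{(n)})) \to (d_{12}, d_{23}, d_{13}, g_X(T))$.

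Next, setting $k_n := k_X(T^{(n)})$ and $k := k_X(T)$, these are by definition the unique solutions of
\[
S(d_{12}^{(n)}, d_{23}^{(n)}, d_{13}^{(n)}, k_n) = g_X(T^{(n)}), \qquad S(d_{12}, d_{23}, d_{13}, k) = g_X(T),
\]
lying in the respective intervals $[-\infty, \Lambda(d_{12}^{(n)}, d_{23}^{(n)}, d_{13}^{(n)})]$ and $[-\infty, \Lambda(d_{12}, d_{23}, d_{13})]$. Because $\Lambda$ is continuous in its arguments, the upper endpoints converge, so $(k_n)$ is eventually bounded above in $\mathbb{R} \cup \{-\infty\}$; extract any subsequence $k_{n_j} \to k^\ast \in [-\infty, \infty]$. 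Using joint continuity of $S$ in all four arguments (which may be read off the trigonometric and hyperbolic systems of equations in \S 3, together with continuity of $S$ across the transition value $k = \Lambda(a,b,c)$ where the Fermat point coalesces with vertex $C$), passing to the limit in the defining equation for $k_{n_j}$ yields $S(d_{12}, d_{23}, d_{13}, k^\ast) = g_X(T)$. Strict monotonicity of $S$ in its last argument forces $k^\ast = k$, so every subsequence clusters at $k$ and hence $k_n \to k$.

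The main obstacle I anticipate is the extended-real behaviour at the endpoints of the interval of admissible curvatures. When $k_X(T)$ saturates the upper endpoint $\Lambda(d_{12}, d_{23}, d_{13})$, one must check that the $k_n$ cannot escape to the right of their allowed intervals; here continuity of $\Lambda$ together with the fact $S(a,b,c,\Lambda(a,b,c)) = a+b$ handles the issue. When $k_X(T) = -\infty$, one must justify the limiting equation using $S(a,b,c,k) \searrow (a+b+c)/2$ as $k \to -\infty$ (from the lower bound in the first lemma combined with monotonicity), so that $k_n \to -\infty$ precisely when $g_X(T^{(n)}) \to (d_{12}+d_{23}+d_{13})/2$. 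Aside from this bookkeeping in $[-\infty, \infty]$, the proof is essentially a continuity-and-inversion argument.
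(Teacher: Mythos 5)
Your proposal is correct and follows essentially the same route as the paper: the paper's own proof is exactly the combination of the Lipschitz estimate \eqref{eqn: g_X} for $g_X$ with the continuity (and implicit strict monotonicity) of $S$ in $k$, which you have simply spelled out in more detail. Your additional care with the convergence of the side lengths, the subsequence argument, and the endpoint cases $k_X(T)=\Lambda(d_{12},d_{23},d_{13})$ and $k_X(T)=-\infty$ fills in steps the paper leaves implicit, but does not change the underlying argument.
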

\begin{proof}
By (\ref{eqn: g_X}), $\lim_{n\rightarrow \infty}g_X(T^{(n)})=g_X(T)$. Since $S$ is also a continuous function, we have $\lim_{n\rightarrow \infty} k_X(T^{(n)})=k_X(T)$.
\end{proof}

\section{Curvature of metric triples in CAT(k) spaces}
For a real number $k$, a geodesic metric space $(X,d)$ is said to be CAT(k) if every geodesic triangle $\Delta ABC$ in $X$ with perimeter less than $2D_k$ satisfies the CAT(k) inequality. That is, let $\Delta A'B'C'$ be a comparison triangle in the model space $(M_k^2, \left\vert
\cdot \right\vert _k)$, with sides of the same length as the sides of $\Delta ABC$, such that for any $D$ in edge $BC$, there is a corresponding point $D'$ in the comparison edge $B'C'$ with $d(B,D)=|B'D'|_k, d(C,D)=|C'D'|_k$ and satisfies the inequality $d(A,D) \le |A'D'|_k$. 

Define
\[H_k=
\begin{cases}
\frac{\pi}{2\sqrt{k}}, & \mbox{if } k>0 \\
\infty, & \mbox{if }k=0 \\
\frac{1.3877}{\sqrt{-k}}, & \mbox{if } k<0.
\end{cases}
\]
\begin{theorem}
\label{main_thm}
Suppose $(X,d)$ is a CAT(k) space for some real number $k$. Then, for any metric triple $T=\{A,B,C\}$ in $X$ with $\max\{d(A,B), d(A,C), d(B,C)\} \le H_k$, it holds that
$k_X(T)\le k$.
\end{theorem}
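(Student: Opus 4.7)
The plan is to reduce the theorem to the single inequality $g_X(T)\le S(a,b,c,k)$, where $a\le b\le c$ are the side lengths of $T$. Once this is established, the strict monotonicity of $S(a,b,c,\cdot)$ in its last argument (Section~3), together with the defining identity $g_X(T)=S(a,b,c,k_X(T))$, forces $k_X(T)\le k$; the regime $k\ge \Lambda(a,b,c)$ is automatic because $k_X(T)\le \Lambda(a,b,c)$ by construction.

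First I would dispatch the Fermat-at-vertex subcase. If the minimum in the Steiner problem on the comparison triangle $\Delta A'B'C'\subset M_k^2$ is attained at a vertex, say $C'$, then $S(a,b,c,k)=a+b$, and plugging $x=C\in X$ into the infimum defining $g_X(T)$ already gives $g_X(T)\le d(A,C)+d(B,C)=a+b$. This handles in particular every $k$ with $k\ge \Lambda(a,b,c)$. In the remaining case $k<\Lambda(a,b,c)$, the Fermat point $P^*$ lies in the interior of $\Delta A'B'C'$, with all three angles at $P^*$ equal to $\frac{2\pi}{3}$.

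The core step is then to construct a single point $P\in X$ whose distances to $A,B,C$ are bounded above by $|A'P^*|_k$, $|B'P^*|_k$, $|C'P^*|_k$ respectively. For this I would apply Reshetnyak's majorization theorem to the geodesic triangle $\Delta ABC\subset X$: the hypothesis $\max_{ij}d(p_i,p_j)\le H_k$ bounds the perimeter by $3H_k$, which for $k>0$ is strictly less than $2D_k$, matching the standard input to the theorem. Reshetnyak then produces a distance-non-increasing map $f:\bar{\Delta} A'B'C'\to X$ from the closed comparison triangle into $X$ which restricts to the canonical edge isometries (so $f(A')=A$, $f(B')=B$, $f(C')=C$). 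Setting $P:=f(P^*)$, the $1$-Lipschitz property of $f$ gives the three required bounds, and summing yields
\[
g_X(T)\le d(A,P)+d(B,P)+d(C,P)\le |A'P^*|_k+|B'P^*|_k+|C'P^*|_k=S(a,b,c,k).
\]

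The main obstacle is justifying the majorization step under the precise constant $H_k$ given, especially in the hyperbolic regime. For $k>0$ the choice $H_k=\pi/(2\sqrt k)$ hands us the perimeter bound $3H_k<2D_k$ for free, and $k=0$ is classical. The explicit value $H_k=1.3877/\sqrt{-k}$ for $k<0$ does not enter the usual statement of Reshetnyak, so a fall-back plan for $k<0$ would be a more hands-on construction: lift the foot $D'$ of the Fermat cevian from $A'$ to a point $D\in[B,C]\subset X$ via the edge isometry, apply CAT($k$) on $\Delta ABC$ to obtain $d(A,D)\le |A'D'|_k$, and then pick $P$ carefully on the geodesic $[A,D]$. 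The delicate point along this alternative route is that shortening $|AD|$ in the sub-triangles $\Delta ABD$ and $\Delta ACD$ can \emph{increase} the comparison distance from $P$ to $B$ or $C$, so the iterated CAT($k$) estimates must be combined sharply; the sharpness of that combination presumably fixes the explicit hyperbolic constant $1.3877$.
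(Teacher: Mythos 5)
Your reduction to the single inequality $g_X(T)\le S(a,b,c,k)$, and the dispatch of the vertex case and of $k\ge\Lambda(a,b,c)$, match the paper exactly; the difference lies in how you produce the competitor point $P$. Your primary route via Reshetnyak's majorization theorem is correct and is genuinely different from the paper's argument: for a geodesic triangle the majorizing convex domain is necessarily the comparison triangle itself (each boundary arc has length equal to the distance between the images of its endpoints, since the image is a geodesic and the map is $1$-Lipschitz, so each arc is a geodesic of the right length), and then $P:=f(P^*)$ immediately gives the three distance bounds. This buys you a much shorter proof and, in fact, a \emph{stronger} theorem: the only hypothesis needed is perimeter $<2D_k$, so the explicit thresholds $H_k$ --- in particular the constant $1.3877/\sqrt{-k}$ for $k<0$ --- become unnecessary. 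The cost is importing a substantial black box. The paper instead runs exactly your fall-back plan: it drops the cevian $A^*D^*$ through the Fermat point, lifts $D$ to $[B,C]$, uses CAT($k$) once to get $d(A,D)=\sigma|A^*D^*|_k$ with $\sigma\le1$, places $O$ proportionally on $[A,D]$, and then compares $\Delta ABD$ and $\Delta ACD$ with fresh comparison triangles; the resulting function $f(\sigma)=|A'O'|_k+|B'O'|_k+|C'O'|_k$ is shown to satisfy $f'(1)=0$ and $f''\le0$, hence $f(\sigma)\le f(1)=g_{M_k^2}(T^*)$. You correctly identified the delicate point of that route: shortening the cevian can increase $|B'O'|_k$ and $|C'O'|_k$, and the concavity estimate controlling this (specifically $\tfrac{d^2}{dx^2}\bigl(\sinh(ax)/\sinh(bx)\bigr)\le0$ in the hyperbolic case) is precisely where the constant $1.3877=\sqrt{1.9257}$ is extracted. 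Either route is sound; if you use Reshetnyak you should state the version you invoke and include the one-line argument that the majorizing domain is the comparison triangle.
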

\begin{proof}
It is sufficient to show that $g_X(T) \le g_{M^2_k}(T^*)$ where $T^*=\{A^*, B^*, C^*\}$ is the comparison configuration of $T$ in the model space $(M_k^2, \left\vert
\cdot \right\vert_k )$ of equal side lengths.  Without losing of generality, we may assume that there exists a Fermat's point $O^*$ of the triangle $A^*B^*C^*$ in $M^2_k$.  Then,
$g_{M^2_k}(T^*)=|A^*O^*|_k+|B^*O^*|_k+|C^*O^*|_k.$
Let $D^*$ be the point on the edge $B^*C^*$ such that $O*$ is located on the geodesic $A^*D^*$.  Suppose $|A^*O^*|_k=s|A^*D^*|_k$ and $|B^*D^*|_k=t|B^*C^*|_k$ for some $s,t \in(0,1)$. Then, let $D$ be the point on the edge $BC$ in the metric space $X$ corresponding to $D^*$, i.e. $d(B,D)=td(B,C)$. Also, let $O$ be the point on the geodesic $AD$ corresponding to $O^*$, i.e. $d(A,O)=sd(A,D)$. Note that 
$g_X(T)\le d(A,O)+d(B,O)+d(C,O)$.
By the CAT(k) inequality, it holds that $|A^*D^*|_k\ge d(A,D)=\sigma |A^*D^*|_k$ for some $\sigma \le 1$. Thus, it follows that $d(A,O)=\sigma |A^*O^*|_k$ and $d(O, D)=\sigma |O^*D^*|_k$.

Now, we consider a comparison triangle $A'B'D'$ (and $A'C'D'$) in $M_k^2$ of $ABD$ (and $ACD$, respectively) in $(X,d)$. Also, let $O'$ on $A'D'$ be the corresponding point of $O$ on $AD$ in $X$. So, $|A'O'|_k=d(A,O)=\sigma |A^*O^*|_k$. Again, by the CAT(k) inequality, we have $d(B,O)\le |B'O'|_k$ and $d(C,O)\le |C'O'|_k$. 
Thus,
\[g_X(T)\le d(A,O)+d(B,O)+d(C,O)\le |A'O'|_k+|B'O'|_k+|C'O'|_k.\]

Note that when $\sigma=1$, then one can simply take $A'=A^*, B'=B^*, C'=C^*, D'=D^*, O'=O^*$. In this case, $|A'O'|_k+|B'O'|_k+|C'O'|_k=|A^*O^*|_k+|B^*O^*|_k+|C^*O^*|_k=g_{M^2_k}(T^*)$.
Denote 
\begin{equation}
f(\sigma)=|A'O'|_k+|B'O'|_k+|C'O'|_k.
\label{function_f}
\end{equation}
for all $\sigma \in [0,1]$.  By Lemma \ref{lemma_zero_k}, Lemma \ref{lemma_positive_k} and Lemma \ref{lemma_negative_k} given below, we have $f(\sigma) \le f(1)$ whenever $\sigma  \in [0,1]$.  Thus, $g_X(T) \le g_{M^2_k}(T^*)$.
\end{proof}

\begin{lemma}
\label{lemma_zero_k}
Let $f$ be the function as given in (\ref{function_f}). When $k=0$, then $f(\sigma) \le f(1)$ whenever $\sigma  \in [0,1]$.
\end{lemma}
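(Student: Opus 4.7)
The plan is to express $f(\sigma)$ in closed form via Stewart's theorem and then show that $f$ is non-decreasing on $[0,1]$ by verifying $f'(\sigma)\ge 0$. Write $L=|A^*D^*|_0$, and note that $|A'B'|_0=|A^*B^*|_0$, $|A'C'|_0=|A^*C^*|_0$, $|B'D'|_0=t|B^*C^*|_0$, and $|C'D'|_0=(1-t)|B^*C^*|_0$ are independent of $\sigma$, while $|A'O'|_0=s\sigma L$ and $|O'D'|_0=(1-s)\sigma L$. Applying Stewart's theorem to the Euclidean triangles $A'B'D'$ and $A'C'D'$ with cevian from $B'$ (respectively $C'$) to $O'$ yields
\[
|B'O'|_0^2=(1-s)|A'B'|_0^2+s|B'D'|_0^2-s(1-s)\sigma^2L^2,
\]
\[
|C'O'|_0^2=(1-s)|A'C'|_0^2+s|C'D'|_0^2-s(1-s)\sigma^2L^2,
\]
so the $\sigma$-dependence of each radical enters only through the last term.

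Differentiating $f(\sigma)=s\sigma L+|B'O'|_0+|C'O'|_0$ gives
\[
f'(\sigma)=sL\left[1-\frac{|O'D'|_0}{|B'O'|_0}-\frac{|O'D'|_0}{|C'O'|_0}\right].
\]
On $[0,1]$ the numerator $|O'D'|_0=(1-s)\sigma L$ is strictly increasing in $\sigma$ while $|B'O'|_0$ and $|C'O'|_0$ are strictly decreasing in $\sigma$; hence the bracketed expression is strictly decreasing in $\sigma$, and it suffices to show $f'(1)\ge 0$. Then $f'(\sigma)\ge 0$ throughout $[0,1]$, so $f$ is non-decreasing on $[0,1]$ and $f(\sigma)\le f(1)$.

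The crux is to establish the exact identity $f'(1)=0$, i.e.,
\[
\frac{|O^*D^*|_0}{|B^*O^*|_0}+\frac{|O^*D^*|_0}{|C^*O^*|_0}=1,
\]
which is where the Fermat-point geometry enters. Since $\angle A^*O^*B^*=\angle A^*O^*C^*=2\pi/3$ and $A^*,O^*,D^*$ are collinear, we have $\angle B^*O^*D^*=\angle C^*O^*D^*=\pi/3$. Setting $\beta=\angle O^*B^*D^*$ and $\gamma=\angle O^*C^*D^*$, the law of sines in the triangles $B^*O^*D^*$ and $C^*O^*D^*$ yields
\[
\frac{|O^*D^*|_0}{|B^*O^*|_0}=\frac{\sin\beta}{\sin(2\pi/3-\beta)},\qquad \frac{|O^*D^*|_0}{|C^*O^*|_0}=\frac{\sin\gamma}{\sin(2\pi/3-\gamma)}.
\]
The collinearity of $B^*,D^*,C^*$ with $D^*$ between them forces $\angle O^*D^*B^*+\angle O^*D^*C^*=\pi$, whence $\beta+\gamma=\pi/3$. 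Using $\sin(2\pi/3-\beta)=\sin(\pi/3+\beta)$ together with a sum-to-product identity then collapses the sum to $1$.

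The main obstacle is this last trigonometric identity: the cancellation at $\sigma=1$ must be \emph{exact}, not merely an inequality, for the monotonicity argument to succeed. This precise equality is a signature of the $2\pi/3$ angle condition characterizing the Fermat point, and once it is verified the remainder of the proof is routine.
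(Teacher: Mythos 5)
Your proposal is correct and follows essentially the same route as the paper: a Stewart-type formula for $\|B'O'\|^2$ and $\|C'O'\|^2$ showing the $\sigma$-dependence enters only through $-s(1-s)\sigma^2L^2$, the resulting expression for $f'(\sigma)$, monotonicity of $f'$ (your direct observation that each fraction increases is a cleaner substitute for the paper's ``$f''\le 0$''), and the exact identity $f'(1)=0$ from the $2\pi/3$ angles at the Fermat point via the law of sines. The trigonometric identity you flag as the crux is precisely the one the paper verifies, so no gap remains.
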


\begin{proof}
  In the case that $k=0$, we have $O'=(1-s)A'+sD'$ with $s=||A^*O^*||/||A^*D^*||$. Here, $||\cdot||=|\cdot|_0$ denotes the Euclidian distance on $M^2_0=\mathbb{R}^2$. Thus, by means of the law of cosines, we have
  \begin{eqnarray*}
  & &||B'O'||^2=||(1-s)B'A'+sB'D'||^2 \\
  &=&(1-s)^2||B'A'||^2+s^2||B'D'||^2+s(1-s)(||B'A'||^2+||B'D'||^2-||A'D'||^2)\\
   &=&(1-s)||B'A'||^2+s||B'D'||^2-s(1-s)||A'D'||^2\\
    &=&(1-s)||B^*A^*||^2+s||B^*D^*||^2-s(1-s)\sigma^2 ||A^*D^*||^2\\
  &=&\frac{1}{||A^*D^*||^2}(||D^*O^*||\times ||B^*A^*||^2+||A^*O^*||\times ||B^*D^*||^2)-||A^*O^*||\times ||A^*D^*||  \sigma^2.
  \end{eqnarray*}
  Similarly, we have
 \[ ||C'O'||^2=\frac{1}{||A^*D^*||^2}(||D^*O^*||\times ||C^*A^*||^2+||A^*O^*||\times ||C^*D^*||^2)-||A^*O^*||\times ||A^*D^*||  \sigma^2.\]
 As a result, 
\begin{eqnarray*}
 &&f(\sigma)=||A'O'||+||B'O'||+||C'O'|| \\
 &&=\sigma ||A^*O^*||+\sqrt{\frac{||A^*O^*|| \times ||B^*D^*||^2+||D^*O^*||\times ||A^*B^*||^2}{||A^*D^*||}-\sigma^2 ||A^*O^*|| \times ||O^*D^*||} \\
 &&+\sqrt{\frac{||A^*O^*||\times ||C^*D^*||^2+||D^*O^*|| \times||A^*C^*||^2}{||A^*D^*||}-\sigma^2 ||A^*O^*|| \times ||O^*D^*||}.
\end{eqnarray*}
Direct calculation yields that  $f''(\sigma)\le 0$  and 
\[f'(\sigma)=||A^*O^*||-\frac{||A^*O^*||\times ||O^*D^*||}{||B'O'||}\sigma -\frac{||A^*O^*||\times ||O^*D^*||}{||C'O'||}\sigma. \]
Thus,
\[f'(1)=||A^*O^*||-\frac{||A^*O^*||\times ||O^*D^*||}{||B^*O^*||} -\frac{||A^*O^*||\times ||O^*D^*||}{||C^*O^*||}.\]
Since $O^*$ is the Fermat point, the angles at $O^*$ are all $\frac{2 \pi}{3}$. Denote the angle $\angle O^*B^*D^*=\alpha$, then by the law of sine, we have 
\begin{eqnarray*}
f'(1)&=&||A^*O^*||(1-\frac{ ||O^*D^*||}{||B^*O^*||} -\frac{||O^*D^*||}{||C^*O^*||})\\
&=&||A^*O^*||(1-\frac{\sin \alpha}{\sin(\frac{2\pi}{3}-\alpha)} -\frac{\sin (\frac{\pi}{3}-\alpha)}{\sin(\frac{\pi}{3}+\alpha)}) \\
&=&||A^*O^*||(1-1)=0.
\end{eqnarray*}
Now, since $f''(\sigma)\le 0$ and $f'(1)=0$, it holds that $f'(\sigma) \ge  f'(1)=0$ for $\sigma \in [0,1]$. Thus, $f$ is increasing on $[0,1]$, and $f(\sigma) \le f(1)$ whenever $\sigma  \in [0,1]$.
\end{proof}

To consider the case when $k>0$, we need the following two lemmas first.  For simplicity, we let $|\cdot|$ denote the standard distance $|\cdot|_1$ on the unit sphere $\mathbb{S}^2$.
\begin{lemma}
Let $\Delta B'D'O'$ be a triangle in the unit sphere $(\mathbb{S}^2, |\cdot|)$. Then,
\begin{equation}
\cot(|O'B'|)=\cot(|O'D'|) \cos(\alpha)+\cot(\gamma)\frac{\sin(\alpha)}{\sin(|O'D'|)}
\label{cotangent}
\end{equation}
where $\alpha=\angle B'O'D' $ and $\gamma =\angle B'D'O'$.
\end{lemma}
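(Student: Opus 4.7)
The plan is to recognize this identity as the classical \emph{cotangent four-parts formula} of spherical trigonometry applied to the triangle $\Delta B'D'O'$, and to derive it directly from the spherical laws of cosines and sines. To set notation, I would write $p=|B'D'|$, $q=|O'D'|$, $r=|O'B'|$, so that the desired identity becomes
\begin{equation*}
\cot r = \cot q\,\cos\alpha + \cot\gamma\,\frac{\sin\alpha}{\sin q},
\end{equation*}
with $\alpha$ the angle at vertex $O'$ (between the sides of lengths $q$ and $r$) and $\gamma$ the angle at vertex $D'$ (between the sides of lengths $p$ and $q$).

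The first step is to apply the spherical law of cosines at vertex $D'$, which gives $\cos r = \cos p\cos q + \sin p\sin q\cos\gamma$, hence
\begin{equation*}
\cos\gamma = \frac{\cos r-\cos p\cos q}{\sin p\sin q},
\end{equation*}
and simultaneously to invoke the spherical law of sines in the form $\sin\gamma = \sin r\sin\alpha/\sin p$. Dividing these two expressions, the $\sin p$'s cancel and I obtain a clean formula
\begin{equation*}
\cot\gamma = \frac{\cos r-\cos p\cos q}{\sin q\sin r\sin\alpha},
\end{equation*}
which in particular no longer involves the angle $\gamma$ at all.

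The second step is to substitute this into the right-hand side of the target identity and clear the common denominator $\sin^{2}q\sin r$. The remaining obstruction is that the side $p$ still appears in the expression; to eliminate it I would apply the spherical law of cosines a second time, now at the vertex $O'$, which gives $\cos p=\cos q\cos r+\sin q\sin r\cos\alpha$. Expanding $\cos p\cos q$, the mixed term cancels exactly against $\sin q\cos q\cos\alpha\sin r$, and the identity $1-\cos^{2}q=\sin^{2}q$ collapses what remains to $\cos r\sin^{2}q$. Dividing by $\sin^{2}q\sin r$ yields precisely $\cot r$, completing the derivation.

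The main difficulty is purely algebraic bookkeeping rather than conceptual: one must be careful that the quantities $\sin q$, $\sin r$, $\sin p$, $\sin\alpha$, $\cos q$ used as denominators do not vanish. Since the lemma concerns a genuine (nondegenerate) spherical triangle $\Delta B'D'O'$, none of the sides equals $0$ or $\pi$ and none of the angles is $0$ or $\pi$, so all the divisions are legitimate; any boundary configuration can be recovered by continuity from the generic case.
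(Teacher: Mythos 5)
Your proof is correct and uses essentially the same ingredients as the paper's: the spherical law of cosines applied at both $D'$ and $O'$ together with the law of sines, i.e., the standard derivation of the four-parts (cotangent) formula. The only difference is the direction of the algebra — you start from the right-hand side and eliminate $\gamma$ and $p$, while the paper expands $\cot(|O'B'|)$ and solves the resulting self-referential equation — which is an immaterial reorganization.
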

\begin{proof}
By the spherical laws of cosines and sines, it follows that
\begin{eqnarray*}
& &\cot(|O'B'|)=\frac{\cos(|O'B'|)}{\sin(|O'B'|)}\\
&=&\frac{\cos(|B'D'|)\cos(|O'D'|)+\sin(|B'D'|)\sin(|O'D'|)\cos(\gamma)}{\sin(|O'B'|)}\\
&=&\frac{[\cos(|O'B'|)\cos(|O'D'|)+\sin(|O'B'|)\sin(|O'D'|)\cos(\alpha)]\cos(|O'D'|)}{\sin(|O'B'|)}\\
& &+\frac{\sin(|B'D'|)\sin(|O'D'|)\cos(\gamma)}{\sin(|O'B'|)}\\
&=&\cot(|O'B'|)\cos^2(|O'D'|)+\sin(|O'D'|)\cos(|O'D'|)\cos(\alpha)+\frac{\sin(\alpha)\sin(|O'D'|)}{\sin(\gamma)}\cos(\gamma).
\end{eqnarray*}
Simplifying it leads to (\ref{cotangent}). 
\end{proof}
\begin{lemma} 
\label{spherical_lemma}
Let $\Delta A^*B^*C^*$  be a triangle on the unit sphere $\mathbb{S}^2$ with a Fermat's point $O^*$ inside the triangle. Let $D^*$ be the point on the arc $B^*C^*$ such that $O^*$ is on the arc $A^*D^*$. 
Also, let $\Delta A'B'D'$ be a triangle on $\mathbb{S}^2$ such that $|A'B'|=|A^*B^*|, |B'D'|=|B^*D^*|$ and $|A'D'|=\sigma |A^*D^*|$ for some $\sigma \in (0,1]$. Let $O'$ be the point on the arc $A'D'$ such that $|A'O'|=\sigma|A^*O^*|$ and $|O'D'|=\sigma |O^*D^*|$. If $\max\{|A^*B^*|, |A^*C^*|, |B^*C^*|\}\le \frac{\pi}{2},$ then for all $\sigma \in[0,1]$, 
\begin{equation}
\frac{d(|O'B'|)}{d\sigma}\ge -\frac{|O^*A^*|}{2}-\frac{|A^*D^*|\sin(|A^*O^*|)}{\sin(|A^*D^*|)}\sin(\frac{\pi}{3}) \cot(\gamma),
\label{d_OB}
\end{equation}
where $\gamma=\angle B'D'O'$.

\end{lemma}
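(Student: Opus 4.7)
The strategy is to compute $\frac{d|O'B'|}{d\sigma}$ by implicit differentiation in the spherical triangle $\Delta B'O'D'$, then estimate the resulting expression term-by-term using the hypothesis that every side of $\Delta A^*B^*C^*$ lies in $(0,\pi/2]$.

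First, because $O'$ lies on the geodesic arc $A'D'$, the angle $\gamma=\angle B'D'O'$ coincides with the angle at $D'$ in the triangle $\Delta A'B'D'$. In that triangle only $|A'D'|=\sigma|A^*D^*|$ varies with $\sigma$, while $|A'B'|=|A^*B^*|$ and $|B'D'|=|B^*D^*|$ stay fixed, so the spherical law of cosines
\begin{equation*}
\cos|A'B'|=\cos(\sigma|A^*D^*|)\cos|B'D'|+\sin(\sigma|A^*D^*|)\sin|B'D'|\cos\gamma
\end{equation*}
can be implicitly differentiated to express $d\gamma/d\sigma$ as an explicit trigonometric function of $\sigma$ and the fixed data. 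An entirely analogous argument applied to $\Delta A'B'O'$ (with $|A'O'|=\sigma|A^*O^*|$) yields $d\alpha/d\sigma$, where $\alpha:=\angle B'O'D'=\pi-\angle B'O'A'$.

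Next, I would apply the cotangent identity (\ref{cotangent}) of the preceding lemma to $\Delta B'O'D'$ to obtain
\begin{equation*}
\cot|O'B'|=\cot(\sigma|O^*D^*|)\cos\alpha+\frac{\sin\alpha}{\sin(\sigma|O^*D^*|)}\cot\gamma,
\end{equation*}
and differentiate this in $\sigma$, substituting the expressions for $d\alpha/d\sigma$ and $d\gamma/d\sigma$ from the previous paragraph. The result splits naturally into a part independent of $\cot\gamma$, which I expect to bound below by $-|O^*A^*|/2$ (the factor $1/2=\cos(\pi/3)$ arising from the Fermat angle $\alpha|_{\sigma=1}=\pi/3$), and a part proportional to $\cot\gamma$ whose coefficient should collapse, via the spherical law of sines in the auxiliary triangles and the Fermat condition at $O^*$, to $-\frac{|A^*D^*|\sin|A^*O^*|}{\sin|A^*D^*|}\sin(\pi/3)$.

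The hypothesis $\max\{|A^*B^*|,|A^*C^*|,|B^*C^*|\}\le \pi/2$ is what makes the sign control uniform in $\sigma\in[0,1]$: it forces $|B^*D^*|\le \pi/2$ and $|A^*D^*|\le \pi/2$, so cosines of the relevant sides stay nonnegative and sines stay bounded, preventing pathological terms in the derivative from overwhelming the clean lower bound. The main obstacle will be the algebraic consolidation: after the implicit differentiation, one is faced with a fairly long sum of spherical trigonometric terms, and the challenge is to regroup them so that the two pieces of (\ref{d_OB}) emerge in closed form. In particular, identifying the factor $\sin|A^*O^*|/\sin|A^*D^*|$ as a spherical-law-of-sines quantity attached to the degenerate configuration formed by $A^*,O^*,D^*$ on a single geodesic (together with $B^*$) is the delicate step, and the monotonicity of $\sin\alpha$ near $\pi/3$ is what allows the $\sin(\pi/3)$ factor to serve as a valid lower estimate for all $\sigma\in[0,1]$.
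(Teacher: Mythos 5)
Your route to the derivative formula is plausible (and roughly parallel to the paper's, which instead differentiates the closed-form expression $\cos|O'B'|=\cos(|A^*B^*|)\frac{\sin(|O^*D^*|\sigma)}{\sin(|A^*D^*|\sigma)}+\cos(|B^*D^*|)\frac{\sin(|A^*O^*|\sigma)}{\sin(|A^*D^*|\sigma)}$ and then invokes the cotangent identity); both lead to
\begin{equation*}
\frac{d|O'B'|}{d\sigma}=-|O^*A^*|\cos\alpha(\sigma)-\frac{|A^*D^*|\sin(|A^*O^*|\sigma)}{\sin(|A^*D^*|\sigma)}\sin\alpha(\sigma)\,\cot\gamma(\sigma).
\end{equation*}
But your plan for passing from this to the claimed bound --- estimating the $\cos\alpha$ term and the $\cot\gamma$ term separately --- has a genuine gap. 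The coefficient of $\cot\gamma$ does not ``collapse'' to $-\frac{|A^*D^*|\sin|A^*O^*|}{\sin|A^*D^*|}\sin(\pi/3)$ for general $\sigma$; it equals that quantity only at $\sigma=1$, where the Fermat condition forces $\alpha=\pi/3$. For $\sigma<1$ you would need simultaneously $\cos\alpha(\sigma)\le\tfrac12$ and a comparison between $\frac{\sin(|A^*O^*|\sigma)}{\sin(|A^*D^*|\sigma)}\sin\alpha(\sigma)\cot\gamma(\sigma)$ and its value at $\sigma=1$; neither is established, and the second is not even a clean monotonicity statement because $\cot\gamma$ changes sign when $\gamma$ crosses $\pi/2$, so enlarging the coefficient can push the term in either direction. ``Monotonicity of $\sin\alpha$ near $\pi/3$'' does not resolve this, since you have no control on which side of $\pi/3$ the angle $\alpha(\sigma)$ lies.

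The idea your proposal is missing is the paper's actual mechanism: one shows that $\sigma\mapsto|O'B'|$ is \emph{concave} on $[0,1]$, so that its derivative is nonincreasing and hence bounded below by its value at $\sigma=1$, where all the constants are pinned down by the Fermat angles. The concavity is where the hypothesis $\max\{|A^*B^*|,|A^*C^*|,|B^*C^*|\}\le\pi/2$ enters in a precise way: writing $p(\sigma)=\cos|O'B'|$, the closed form above exhibits $p$ as a nonnegative combination of $\frac{\sin(a\sigma)}{\sin(b\sigma)}$ with $a\le b$ (nonnegativity of the coefficients is exactly $\cos(|A^*B^*|)\ge0$ and $\cos(|B^*D^*|)\ge0$), each of which is convex; hence $p\ge0$ and $p''\ge0$, and the formula $\frac{d^2|O'B'|}{d\sigma^2}=\frac{-p''}{\sqrt{1-p^2}}-\frac{(p')^2p}{(1-p^2)^{3/2}}$ gives $\frac{d^2|O'B'|}{d\sigma^2}\le0$. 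Without this (or an equivalent argument that the derivative is minimized at $\sigma=1$), the evaluation at $\sigma=1$ does not yield a bound valid for all $\sigma\in[0,1]$, so the proposal as written does not prove the lemma.
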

\begin{proof}
By the spherical law of cosines, we have
\begin{eqnarray*}
&&\cos(|O'B'|)=\cos(|A'B'|)\cos(|A'O'|)+\sin(|A'B'|)\sin(|A'O'|)\cos(\angle B'A'O') \\
&=&\cos(|A'B'|)\cos(|A'O'| )+\sin(|A'B'|)\sin(|A'O'|)\frac{\cos(|B'D'|)-\cos(|A'B'|)\cos(|A'D'|)}{\sin(|A'B'|)\sin(|A'D'|)} \\
&=&\cos(|A'B'|)\frac{\sin(|O'D'|)}{\sin(|A'D'|)} +\cos(|B'D'|)\frac{\sin(|A'O'|)}{\sin(|A'D'|)}.
\end{eqnarray*}
Thus,
\begin{equation}
\cos(|O'B'|)=\cos(|A^*B^*|)\frac{\sin(|O^*D^*|  \sigma)}{\sin(|A^*D^*|  \sigma)} +\cos(|B^*D^*|)\frac{\sin(|A^*O^*|  \sigma)}{\sin(|A^*D^*|  \sigma)}.
\label{cos_OB}
\end{equation}
By the law of cosines, it follows that
\begin{equation}
\cos(|A^*B^*|) =\cos(|B'O'|)\cos(|A^*O^*|\sigma)-\sin(|B'O'|)\sin(|A^*O^*|\sigma)\cos(\alpha)
\label{cos_AB}
\end{equation}
and
\begin{equation}
\cos(|B^*D^*|)=\cos(|B'O'|)\cos(|O^*D^*|\sigma)+\sin(|B'O'|)\sin(|O^*D^*|\sigma)\cos(\alpha).
\label{cos_BD}
\end{equation}
Now, by taking derivatives on (\ref{cos_OB}) with respect to $\sigma$, and a simplification using (\ref{cos_AB}) and (\ref{cos_BD}), we have
\begin{eqnarray*}
&& -\sin(|O'B'|)\frac{d(|O'B'|)}{d\sigma} \\
&=&\frac{|O^*D^*| \cos(|O^*D^*| \sigma)\sin(|A^*D^*|\sigma)-|A^*D^*| \sin(|O^*D^*|\sigma)\cos(|A^*D^*|\sigma)}{\sin^2 (|A^*D^*|\sigma)}\cos(|A^*B^*|)\\
& & +\frac{|O^*A^*| \cos(|O^*A^*|\sigma)\sin(|A^*D^*|\sigma)-|A^*D^*| \sin(|O^*A^*|\sigma)\cos(|A^*D^*|\sigma)}{\sin^2 (|A^*D^*|\sigma)}\cos(|B^*D^*|)\\
&=&\frac{\cos(|B'O'|)}{\sin (|A^*D^*|\sigma)}[|A^*D^*|\sin(|O^*D^*|\sigma)\sin(|A^*O^*|\sigma)] +\frac{\sin(|B'O'|)}{\sin(|A^*D^*|\sigma)}\cos(\alpha)\\ 
&&\times[|O^*A^*| \cos(|O^*A^*|\sigma)\sin(|O^*D^*|\sigma)-|O^*D^*| \cos(|O^*D^*|\sigma)\sin(|A^*O^*|\sigma)]. 
\end{eqnarray*}

Thus, by means of (\ref{cotangent}), it follows that
\begin{eqnarray*}
&&-\frac{d(|O'B'|)}{d\sigma} \\
&=&\frac{|A^*D^*|\sin(|O^*D^*|\sigma)\sin(|A^*O^*|\sigma)}{\sin (|A^*D^*|\sigma)}\cot(|B'O'|)\\
& & +\frac{|O^*A^*| \cos(|O^*A^*|\sigma)\sin(|O^*D^*|\sigma)-|O^*D^*| \cos(|O^*D^*|\sigma)\sin(|A^*O^*|\sigma)}{\sin(|A^*D^*|\sigma)}\cos(\alpha)\\
& =&\frac{|A^*D^*|\sin(|O^*D^*|\sigma)\sin(|A^*O^*|\sigma)}{\sin (|A^*D^*|\sigma)}(\cot(|O^*D^*|\sigma) \cos(\alpha)+\cot(\gamma)\frac{\sin(\alpha)}{\sin(|O^*D^*|\sigma)}) \\
&&+\frac{|O^*A^*| \cos(|O^*A^*|\sigma)\sin(|O^*D^*|\sigma)-|O^*D^*| \cos(|O^*D^*|\sigma)\sin(|A^*O^*|\sigma)}{\sin(|A^*D^*|\sigma)}\cos(\alpha)\\
& =&|O^*A^*| \cos(\alpha)+\frac{|A^*D^*|\sin(|A^*O^*|\sigma)}{\sin (|A^*D^*|\sigma)}\sin(\alpha)\cot(\gamma).
\end{eqnarray*}
As a result, we have for all $\sigma \in[0,1]$,
\begin{equation}
\label{d_OB_1}
\frac{d(|O'B'|)}{d\sigma}=-|O^*A^*| \cos(\alpha)-\frac{|A^*D^*|\sin(|A^*O^*| \sigma)}{\sin(|A^*D^*| \sigma)}\sin(\alpha) \cot(\gamma).
\end{equation}
Now, we want to show that $\frac{d^2(|O'B'|)}{d\sigma^2} \le 0$ for all $\sigma \in[0,1]$. Let $p(\sigma)=\cos(|O'B'|)$. Then, 
\begin{eqnarray}
\frac{d^2(|O'B'|)}{d\sigma^2}&=&\frac{-p''(\sigma)}{\sqrt{1-p(\sigma)^2}}-\frac{(p'(\sigma))^2p(\sigma)}{(\sqrt{1-p(\sigma)^2})^3}.
\end{eqnarray}
Now, we investigate $p(\sigma)$ via the equation (\ref{cos_OB}). Clearly, when $$\max\{|A^*B^*|, |A^*C^*|, |B^*C^*|\}\le \frac{\pi}{2},$$ both $\cos(|A^*B^*|)\ge 0$ and $\cos(|B^*D^*|)\ge 0$. On the other hand, direct calculations give that when $b\ge a>0$,
\begin{eqnarray*}
\frac{d^2}{dx^2}\left(\frac{\sin(ax)}{\sin(bx)}\right)=\frac{\sin(ax)}{\sin(bx)}[(b\cot(bx)-a\cot(ax))^2+(\frac{b^2}{\sin^2(bx)}-\frac{a^2}{\sin^2(ax)})]\ge 0.
\end{eqnarray*}
Thus, by equation (\ref{cos_OB}), it follows that  $p(\sigma)\ge 0$ and $p''(\sigma)\ge 0$ for all $\sigma\le 1$. This shows that $\frac{d^2(|O'B'|)}{d\sigma^2} \le 0$.
As a result, when $0\le \sigma \le 1$,
\begin{equation*}
\frac{d(|O'B'|)}{d\sigma}\ge \frac{d(|O'B'|)}{d\sigma}\vert_{\{\sigma=1\}}=-\frac{|O^*A^*|}{2}-\frac{|A^*D^*|\sin(|A^*O^*|)}{\sin(|A^*D^*|)}\sin(\frac{\pi}{3}) \cot(\gamma).
\end{equation*}
Here, in the last equality, we used (\ref{d_OB_1}) and the fact that $O^*$ is the Fermat's point.
\end{proof}

\begin{lemma}
\label{lemma_positive_k}
Let $f$ be the function as given in (\ref{function_f}). When $k>0$, then $f(\sigma) \le f(1)$ whenever $\sigma  \in [0,1]$.
\end{lemma}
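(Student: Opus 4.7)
The plan is to establish that $f'(\sigma)\ge 0$ on the natural range of $\sigma$, from which the monotonicity of $f$ and hence $f(\sigma)\le f(1)$ follow immediately. First, using the scaling identity (\ref{S_positive}), I would reduce to the case $k=1$; after scaling, the hypothesis of Theorem \ref{main_thm} becomes $\max\{|A^*B^*|,|A^*C^*|,|B^*C^*|\}\le \pi/2$, which is precisely the assumption needed to invoke Lemma \ref{spherical_lemma}.

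Next, I would compute the derivative of $f$ term by term. The first summand is trivial: by construction $|A'O'|=\sigma|A^*O^*|$, so $\frac{d|A'O'|}{d\sigma}=|A^*O^*|$. For the remaining two summands, I would apply Lemma \ref{spherical_lemma} to the triangle $\Delta A'B'D'$ with angle $\gamma_B:=\angle B^*D^*A^*$, and then apply it again by the evident $B\leftrightarrow C$ symmetry to the triangle $\Delta A'C'D'$ with angle $\gamma_C:=\angle C^*D^*A^*$. Both applications produce lower bounds for $\frac{d|O'B'|}{d\sigma}$ and $\frac{d|O'C'|}{d\sigma}$, each of the form $-\frac{|O^*A^*|}{2}$ plus a term proportional to $\cot\gamma_B$ or $\cot\gamma_C$, respectively, with the same positive coefficient that is independent of $\sigma$.

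The decisive step is the observation that, since $D^*$ lies on the geodesic arc from $B^*$ to $C^*$, the angles at $D^*$ are supplementary: $\gamma_B+\gamma_C=\pi$, and so $\cot\gamma_B+\cot\gamma_C=0$. Summing the three contributions gives $f'(\sigma)\ge |A^*O^*|-|A^*O^*|+0=0$ for every admissible $\sigma$, so $f$ is non-decreasing on its domain and $f(\sigma)\le f(1)$ follows.

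The hard technical work has already been carried out in Lemma \ref{spherical_lemma}; what remains is essentially this supplementary-angle cancellation at $D^*$ together with the reduction to $k=1$. The only subtlety I would verify is that $\sigma|A^*D^*|=d(A,D)$ automatically satisfies the triangle inequalities with the fixed sides $|A^*B^*|=d(A,B)$, $|B^*D^*|=d(B,D)$ (and similarly on the $C$-side), so that the comparison triangles $\Delta A'B'D'$ and $\Delta A'C'D'$ remain well-defined throughout the relevant range of $\sigma$.
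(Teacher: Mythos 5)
Your proposal is correct and follows essentially the same route as the paper: reduce to $k=1$, differentiate the first term directly, apply Lemma \ref{spherical_lemma} to the two comparison triangles $\Delta A'B'D'$ and $\Delta A'C'D'$, and use the supplementary angles $\gamma$ and $\pi-\gamma$ at $D^*$ to cancel the cotangent terms, giving $f'(\sigma)\ge 0$. The paper's proof is exactly this computation written out in one display.
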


\begin{proof}
Without losing generality, we may assume that $k=1$. So for any $\sigma \le 1$, by (\ref{d_OB}), 
\begin{eqnarray*}
f'(\sigma)&=&\frac{d (|A'O'|+|B'O'|+|C'O'|)}{d\sigma} \\
&\ge & |A^*O^*|-\frac{|A^*O^*|}{2}-\frac{|A^*D^*|\sin(|A^*O^*|)}{\sin(|A^*D^*|)}\sin(\frac{\pi}{3}) \cot(\gamma)\\
& & \quad - \quad\frac{|A^*O^*|}{2}-\frac{|A^*D^*|\sin(|A^*O^*|)}{\sin(|A^*D^*|)}\sin(\frac{\pi}{3}) \cot(\pi-\gamma)=0.\\
\end{eqnarray*}
Thus, $f(\sigma) \le f(1)$.
\end{proof}

Now, we consider the case of $k<0$.

\begin{lemma} 
Using the same notations as in the Lemma \ref{spherical_lemma} except that the unit sphere $\mathbb{S}^2$ is replaced by the hyperbolic plane $(\mathbb{H}^2, |\cdot|_{h})=(M^2_{-1}, |\cdot|_{-1})$. 
If
\begin{equation}
\max\{|A^*B^*|_{h}, |A^*C^*|_{h}, |B^*C^*|_{h}\}\le 1.3877,
\label{condition_hyp}
\end{equation}
then for all $\sigma \in[0,1]$, 
\begin{equation}
\frac{d(|O'B'|_{h})}{d\sigma}\ge -\frac{|O^*A^*|_{h}}{2}-\frac{|A^*D^*|_{h}\sinh(|A^*O^*|_{h})}{\sinh(|A^*D^*|_{h})}\sin(\frac{\pi}{3}) \cot(\gamma),
\label{d_OB2}
\end{equation}
where $\gamma=\angle B'D'O'$.
\end{lemma}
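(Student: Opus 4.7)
The strategy is to mirror the proof of Lemma \ref{spherical_lemma} step by step, replacing each spherical identity by its hyperbolic counterpart. First I would derive a hyperbolic cotangent identity for a triangle $\Delta B'D'O'$ in $\mathbb{H}^2$,
\[\coth(|O'B'|_h) = \coth(|O'D'|_h)\cos(\alpha) + \cot(\gamma)\frac{\sin(\alpha)}{\sinh(|O'D'|_h)},\]
which follows from the hyperbolic laws of cosines and sines by the same algebraic manipulation used to establish (\ref{cotangent}).

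Next, applying the hyperbolic law of cosines to $\Delta A'B'O'$ and substituting the expression for $\cos(\angle B'A'O')$ obtained from $\Delta A'B'D'$, I would obtain the hyperbolic analog of (\ref{cos_OB}),
\[\cosh(|O'B'|_h) = \cosh(|A^*B^*|_h)\frac{\sinh(\sigma|O^*D^*|_h)}{\sinh(\sigma|A^*D^*|_h)} + \cosh(|B^*D^*|_h)\frac{\sinh(\sigma|A^*O^*|_h)}{\sinh(\sigma|A^*D^*|_h)},\]
together with the analogs of (\ref{cos_AB}) and (\ref{cos_BD}) expressing $\cosh(|A^*B^*|_h)$ and $\cosh(|B^*D^*|_h)$ in terms of $\cosh(|B'O'|_h)$, $\sinh(|B'O'|_h)$, and $\cos(\alpha)$. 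Differentiating the displayed identity in $\sigma$, substituting these two expressions, and simplifying with the hyperbolic cotangent identity would then yield the hyperbolic analog of (\ref{d_OB_1}),
\[\frac{d(|O'B'|_h)}{d\sigma} = -|O^*A^*|_h\cos(\alpha) - \frac{|A^*D^*|_h\sinh(\sigma|A^*O^*|_h)}{\sinh(\sigma|A^*D^*|_h)}\sin(\alpha)\cot(\gamma).\]
Evaluating at $\sigma=1$, where $\alpha=\pi/3$ since $A^*, O^*, D^*$ are collinear and $\angle A^*O^*B^*=2\pi/3$, recovers exactly the right-hand side of (\ref{d_OB2}).

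To promote this $\sigma=1$ identity into the claimed lower bound on all of $[0,1]$, it suffices to prove $\frac{d^2(|O'B'|_h)}{d\sigma^2}\le 0$: then the first derivative is non-increasing and is bounded below by its value at $\sigma=1$. Setting $p(\sigma)=\cosh(|O'B'|_h)$, one has
\[\frac{d^2(|O'B'|_h)}{d\sigma^2} = \frac{p''(\sigma)}{\sqrt{p^2-1}} - \frac{(p'(\sigma))^2\, p(\sigma)}{(p^2-1)^{3/2}},\]
so the task reduces to the pointwise inequality $p''(\sigma)\,(p(\sigma)^2-1) \le (p'(\sigma))^2\, p(\sigma)$.

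This inequality is the main obstacle and is genuinely more delicate than its spherical counterpart. In the spherical case, $\frac{d^2}{d\sigma^2}\arccos(p)$ splits into two terms of the same sign, so $p\ge 0$ and $p''\ge 0$ immediately deliver the conclusion; by contrast, in the hyperbolic case the two terms of $\frac{d^2}{d\sigma^2}\,\mathrm{arccosh}(p)$ have opposite signs, and one must show that the positive contribution $p''/\sqrt{p^2-1}$ is dominated by $(p')^2\, p/(p^2-1)^{3/2}$. Using the explicit formula for $p(\sigma)$ above, together with uniform estimates on the ratio $\sinh(ax)/\sinh(bx)$ and its first two derivatives on $[0,1]$, I would verify the reduced inequality; the constant $1.3877$ in (\ref{condition_hyp}) enters as the largest value of $\max\{|A^*B^*|_h,|A^*C^*|_h,|B^*C^*|_h\}$ for which these estimates remain strong enough across every admissible triangle configuration. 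Once $\frac{d^2(|O'B'|_h)}{d\sigma^2}\le 0$ is secured, (\ref{d_OB2}) follows by integrating the derivative from $\sigma$ up to $1$.
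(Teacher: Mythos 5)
The first two-thirds of your argument --- the hyperbolic cotangent identity, the formula $\cosh(|O'B'|_h)=\cosh(|A^*B^*|_h)\frac{\sinh(\sigma|O^*D^*|_h)}{\sinh(\sigma|A^*D^*|_h)}+\cosh(|B^*D^*|_h)\frac{\sinh(\sigma|A^*O^*|_h)}{\sinh(\sigma|A^*D^*|_h)}$, the resulting expression for $\frac{d(|O'B'|_h)}{d\sigma}$, and the reduction of the lemma to showing $\frac{d^2(|O'B'|_h)}{d\sigma^2}\le 0$ --- matches the paper and is correct. The problem is the last step, which is where the entire content of the lemma (and of the constant $1.3877$) lives; there your proposal both leaves a gap and points in the wrong direction.

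You assert that in the hyperbolic case the two terms of $\frac{d^2}{d\sigma^2}\,\mathrm{arccosh}(p)=\frac{p''}{\sqrt{p^2-1}}-\frac{(p')^2p}{(p^2-1)^{3/2}}$ have opposite signs, so that one must show the ``positive'' term $p''/\sqrt{p^2-1}$ is dominated by the other. That premise is false under hypothesis (\ref{condition_hyp}): in fact $p''(\sigma)\le 0$, so both terms are non-positive and no domination estimate is needed. The reason is that for $0<a\le b\le 1.3877$ the ratio $\sinh(ax)/\sinh(bx)$ is \emph{concave} on $[0,1]$ --- note the sign flip from the spherical case, where $\sin(ax)/\sin(bx)$ is convex; to leading order $\sinh(ax)/\sinh(bx)\approx \frac{a}{b}\left(1+\frac{a^2-b^2}{6}x^2\right)$ with $a^2-b^2\le 0$. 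Since $p$ is a combination of two such ratios with coefficients $\cosh(|A^*B^*|_h),\cosh(|B^*D^*|_h)\ge 1$, concavity of $p$ follows. Establishing this concavity is exactly the paper's Lemma \ref{hyperbolic_cosh}, proved by expanding $\coth$ in its Taylor series and reducing to the nonnegativity of $315-168(bx)^2+10(bx)^4-4(bx)^6$, which holds precisely for $(bx)^2\le 1.9257=(1.3877)^2$; that is where the constant comes from, not from a threshold at which domination estimates ``remain strong enough.'' Your plan to verify the reduced inequality via unspecified uniform estimates on $\sinh(ax)/\sinh(bx)$ and its derivatives supplies none of this, so the crucial inequality is left unproved; and because you have the sign of $p''$ backwards, carrying out the plan as described would have you trying to control a positive contribution that is not actually there.
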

\begin{proof}
In the hyperbolic case, analogous statements as in the proof of Lemma \ref{spherical_lemma} still give
\begin{equation}
\cosh(|O'B'|_{h})=\cosh(|A^*B^*|_{h})\frac{\sinh(|O^*D^*|_{h}  \sigma)}{\sinh(|A^*D^*|_{h}  \sigma)} +\cosh(|B^*D^*|_{h})\frac{\sinh(|A^*O^*|_{h}  \sigma)}{\sinh(|A^*D^*|_{h}  \sigma)},
\label{cosh_OB}
\end{equation}
and for all $\sigma \in[0,1]$,
\begin{equation}
\frac{d(|O'B'|_{h})}{d\sigma}=-|O^*A^*|_{h} \cos(\alpha)-\frac{|A^*D^*|_{h}\sinh(|A^*O^*|_{h} \sigma)}{\sinh(|A^*D^*|_{h} \sigma)}\sin(\alpha) \cot(\gamma).
\end{equation}
To prove (\ref{d_OB2}), it is sufficient to show that for all $\sigma \in[0,1]$, $\frac{d^2(|O'B'|_{h})}{d\sigma^2} \le 0$. The proof of this fact requires some further works.

Let $p(\sigma)=\cosh(|O'B'|_{h})$. Then, 
\begin{eqnarray}
\frac{d^2(|O'B'|_{h})}{d\sigma^2}&=&\frac{p''(\sigma)}{\sqrt{p(\sigma)^2-1}}-\frac{(p'(\sigma))^2p(\sigma)}{(\sqrt{p(\sigma)^2-1})^3}.
\end{eqnarray}
Together with equation (\ref{cosh_OB}),  Lemma \ref{hyperbolic_cosh} below implies that $p''(\sigma)\le 0$ under the condition (\ref{condition_hyp}). Since $p(\sigma)> 1$, it follows that $\frac{d^2(|O'B'|_{h})}{d\sigma^2} \le 0$.
\end{proof}

\begin{lemma}
\label{hyperbolic_cosh}
Suppose $0<a\le b \le 1.3877$, then for all $x\in [0,1]$, it holds that
\begin{eqnarray*}
\frac{d^2}{dx^2}\left(\frac{\sinh(ax)}{\sinh(bx)}\right)\le 0.
\end{eqnarray*}

\end{lemma}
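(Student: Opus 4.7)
The plan is to reduce, via the substitutions $c := a/b \in (0, 1]$ and $y := bx$, to showing that $h(y) := \sinh(cy)/\sinh(y)$ is concave on $[0, 1.3877]$ for every $c \in (0, 1]$. Differentiating the identity $h(y)\sinh(y) = \sinh(cy)$ twice yields
\[
h''(y) = (c^2-1)\,h(y) - 2\,h'(y)\coth(y),
\]
so after dividing by $h(y) > 0$ and using $h'(y)/h(y) = c\coth(cy) - \coth(y)$ together with $\coth^2(y) - 1 = \operatorname{csch}^2(y)$, the condition $h''(y) \le 0$ is equivalent to
\[
\Phi(c, y) \;:=\; 2c\,\coth(y)\coth(cy) - 2\,\operatorname{csch}^2(y) - (1 + c^2) \;\ge\; 0.
\]

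Next I would clear denominators: multiplying $\Phi(c, y) \ge 0$ through by the positive quantity $\sinh^2(y)\sinh(cy)$ and applying standard product-to-sum identities reduces the claim to
\[
F(c, y) \;:=\; (1+c)^2 \sinh((2-c)y) - (1-c)^2 \sinh((2+c)y) - 2(3-c^2)\sinh(cy) \;\ge\; 0.
\]
Several immediate observations frame the argument: $F(-1, y) = F(0, y) = F(1, y) \equiv 0$, $F(c, 0) = 0$, and the Taylor expansion at $y = 0$ gives $F(c, y) = \tfrac{4c(1-c^2)}{3}\, y^3 + O(y^5)$, which has the correct sign to leading order. The boundary limit $c \to 0^+$ (after dividing by $c$) reduces to the one-variable inequality $\sinh(2y)/y - \sinh^2(y) - 2 \ge 0$, which I would verify on $[0, 1.3877]$ by a direct monotonicity check: the expression equals $y^2/3 + O(y^4)$ near $0$ and remains strictly positive up to roughly $y \approx 1.65$, comfortably beyond the stated threshold.

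The main obstacle is uniformity in the interior $c \in (0, 1)$, since $F(c, y)$ is not monotone in $c$ and so an endpoint comparison alone cannot suffice. My approach would be to factor $F(c, y) = c(1-c^2)\, K(c, y)$, which is natural because $F$ vanishes at $c \in \{-1, 0, 1\}$ and because the cubic coefficient $\tfrac{4}{3}$ in the $y$-expansion is the leading value of $K$. The remaining task is to show $K(c, y) \ge 0$ on $(0, 1) \times [0, 1.3877]$, which one can approach either by an explicit integral representation of the Taylor remainder in $c$ anchored at the three zeros, or by a monotonicity argument in $y$ for each fixed $c$. The conservative choice of the threshold $1.3877$ (well below the numerical sign-change near $y \approx 1.65$, where $\Phi$ would first fail for some $c$) supplies a positive margin that makes these estimates elementary and avoids the need for sharp analysis near the transcendental boundary.
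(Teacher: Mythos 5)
Your reductions are all correct: the substitution $c=a/b$, $y=bx$ is legitimate, the identity $h''/h=(c^2+1)+2\operatorname{csch}^2(y)-2c\coth(y)\coth(cy)$ checks out (and is, up to the factor $-b^2$, exactly the paper's quantity $g(x)=2b\coth(bx)(b\coth(bx)-a\coth(ax))+a^2-b^2$), and your product-to-sum computation giving $4\,\Phi(c,y)\sinh^2(y)\sinh(cy)=F(c,y)$ is verified, as are the vanishing of $F$ at $c\in\{-1,0,1\}$ and the leading term $\tfrac{4c(1-c^2)}{3}y^3$. So up to this point you have only restated the lemma in an equivalent form.

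The gap is that the actual content of the lemma --- the uniform inequality $K(c,y)\ge 0$ (equivalently $F(c,y)\ge 0$) for all $c\in(0,1)$ and $y\in[0,1.3877]$ --- is never proved. You name two possible strategies (``an explicit integral representation of the Taylor remainder in $c$ anchored at the three zeros, or a monotonicity argument in $y$ for each fixed $c$'') and assert that the margin below the numerical sign change near $y\approx 1.6$ ``makes these estimates elementary,'' but neither argument is carried out, and your only completed verification is the single boundary slice $c\to 0^+$. Since $F$ is, as you note, not monotone in $c$, that slice does not control the interior, and ``the worst $c$'' could a priori occur anywhere; an appeal to numerics for a two-parameter family is not a proof. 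For contrast, the paper closes exactly this step with explicit bounds: it writes $g(x)=2b\coth(bx)\bigl(b\coth(bx)-a\coth(ax)\bigr)+a^2-b^2$, uses the alternating Taylor series of $\coth$ to bound $b\coth(bx)-a\coth(ax)$ above by its degree-$5$ truncation and $b\coth(bx)$ above by $\tfrac{1}{x}+\tfrac{b^2x}{3}$, and after multiplying out and majorizing $a$ by $b$ reduces everything to the one-variable polynomial inequality $315-168t+10t^2-4t^3\ge 0$ for $t=(bx)^2\le 1.9257$ --- which is precisely where the constant $1.3877=\sqrt{1.9257}$ comes from. To complete your version you would need an analogous explicit, checkable estimate for $F$ (or $K$) that is uniform in $c$; as written, the proposal stops just before the hard part.
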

\begin{proof}
Direct calculations give that
\begin{eqnarray*}
\frac{d^2}{dx^2}\left(\frac{\sinh(ax)}{\sinh(bx)}\right)=\frac{\sinh(ax)}{\sinh(bx)} g(x),
\end{eqnarray*}
where 
\begin{eqnarray*}
g(x)&=&(b\coth(bx)-a\coth(ax))^2+\frac{b^2}{\sinh^2(bx)}-\frac{a^2}{\sinh^2(ax)}\\
&=&2b \coth(bx)(b\coth(bx)-a\coth(ax))+a^2-b^2. \\
\end{eqnarray*}
Note that on the interval $(-\pi, \pi)$, the function $\coth(x)$ can be expressed as its Taylor series
\begin{eqnarray*}
\coth(x)=\frac{1}{x}+\sum_{n=1}^{\infty}\frac{2^{2n}B_{2n}}{(2n)!}x^{2n-1}=\frac{1}{x}+\frac{x}{3}-\frac{x^3}{45}+\frac{2x^5}{945}-\cdots,
\end{eqnarray*}
where $B_{2n}$ is the $2n$-th Bernoulli number given by
\[B_{2n}=(-1)^{n+1}\frac{2(2n)!}{(2\pi)^{2n}} (1+\frac{1}{2^{2n}}+\frac{1}{3^{2n}}+\frac{1}{4^{2n}}+\cdots).\]
Thus, when $0 \le ax\le bx<\pi$,
\begin{eqnarray*}
&& 0\le b\coth(bx)-a\coth(ax)\\
&&=b(\frac{1}{bx}+\frac{bx}{3}-\frac{b^3x^3}{45}+\frac{2b^5x^5}{945}-\cdots)-a(\frac{1}{ax}+\frac{ax}{3}-\frac{a^3x^3}{45}+\frac{2a^5x^5}{945}-\cdots) \\
&&=\frac{x}{3}(b^2-a^2)-\frac{x^3}{45}(b^4-a^4)+\frac{2x^5}{945}(b^6-a^6)-\cdots \\
&&\le \frac{x}{3}(b^2-a^2)-\frac{x^3}{45}(b^4-a^4)+\frac{2x^5}{945}(b^6-a^6).
\end{eqnarray*}
Also,
\begin{eqnarray*}
 0\le b \coth(bx)=b(\frac{1}{bx}+\frac{bx}{3}-\frac{b^3x^3}{45}+\frac{2b^5x^5}{945}-\cdots) \le b(\frac{1}{bx}+\frac{bx}{3})=\frac{1}{x}+\frac{b^2x}{3}.
\end{eqnarray*}
As a result,
\begin{eqnarray*}
g(x)&=&2b \coth(bx)(b\coth(bx)-a\coth(ax))+a^2-b^2\\
&& \le 2(\frac{1}{x}+\frac{b^2x}{3})[\frac{x}{3}(b^2-a^2)-\frac{x^3}{45}(b^4-a^4)+\frac{2x^5}{945}(b^6-a^6)]+a^2-b^2\\
&&=\frac{a^2-b^2}{3}[1+\frac{2a^2-8b^2}{15}x^2+\frac{12b^4+12a^2b^2-2a^4}{315}x^4-\frac{4b^2(a^4+b^4+a^2b^2)}{945}x^6]\\
&&\le\frac{a^2-b^2}{3}[1+\frac{0-8b^2}{15}x^2+\frac{12b^4+0-2b^4}{315}x^4-\frac{4b^2(b^4+b^4+b^4)}{945}x^6]\\
&&=\frac{a^2-b^2}{945}[315-168(bx)^2+10(bx)^4-4(bx)^6] \le 0,
\end{eqnarray*}
whenever $0<a\le b$ and $ (bx)^2\le 1.9257$. Thus,  when $0<b\le \sqrt{1.9257}=1.3877$, it holds that $g(x)\le 0$ for all $x\in [0,1]$ and for all $0<a\le b$.
\end{proof}
\begin{lemma}
\label{lemma_negative_k}
Let $f$ be the function as given in (\ref{function_f}). When $k<0$, then $f(\sigma) \le f(1)$ whenever $\sigma  \in [0,1]$.
\end{lemma}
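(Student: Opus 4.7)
The proof parallels that of Lemma \ref{lemma_positive_k}, using the hyperbolic derivative estimate \eqref{d_OB2} in place of the spherical one \eqref{d_OB}. Without loss of generality assume $k=-1$; then the hypothesis $\max\{d(A,B),d(A,C),d(B,C)\}\le H_k$ of Theorem \ref{main_thm} becomes exactly the condition \eqref{condition_hyp}, so the preceding hyperbolic lemma applies to both comparison triangles $\Delta A'B'D'$ and $\Delta A'C'D'$.

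Since $|A'O'|_h=\sigma|A^*O^*|_h$, we have $\frac{d|A'O'|_h}{d\sigma}=|A^*O^*|_h$. Writing $\gamma=\angle B'D'O'$ and noting that $D^*$ lies on the geodesic $B^*C^*$ with $O^*$ the Fermat point (so the corresponding angle $\angle C'D'O'$ equals $\pi-\gamma$), inequality \eqref{d_OB2} applied to both sides yields
\begin{align*}
f'(\sigma) &= \frac{d|A'O'|_h}{d\sigma}+\frac{d|B'O'|_h}{d\sigma}+\frac{d|C'O'|_h}{d\sigma} \\
&\ge |A^*O^*|_h-\frac{|A^*O^*|_h}{2}-\frac{|A^*D^*|_h\sinh(|A^*O^*|_h)}{\sinh(|A^*D^*|_h)}\sin\!\tfrac{\pi}{3}\cot(\gamma) \\
&\quad -\frac{|A^*O^*|_h}{2}-\frac{|A^*D^*|_h\sinh(|A^*O^*|_h)}{\sinh(|A^*D^*|_h)}\sin\!\tfrac{\pi}{3}\cot(\pi-\gamma).
\end{align*}
The two $|A^*O^*|_h/2$ terms cancel the leading $|A^*O^*|_h$, and the identity $\cot(\gamma)+\cot(\pi-\gamma)=0$ kills the remaining two. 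Hence $f'(\sigma)\ge 0$ on $[0,1]$, so $f$ is nondecreasing there and $f(\sigma)\le f(1)$.

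The only subtlety is verifying that the hypothesis of Theorem \ref{main_thm} supplies exactly the diameter bound required by the preceding hyperbolic lemma; this is the reason the constant $1.3877$ appears in the definition of $H_k$ for $k<0$, and rescaling by $\sqrt{-k}$ reduces the general case to $k=-1$. Once that normalization is in place the computation is purely algebraic and proceeds identically to the spherical case.
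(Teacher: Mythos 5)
Your proof is correct and is precisely the argument the paper intends: the paper's own proof of Lemma \ref{lemma_negative_k} is a one-line remark that it "follows from an analogous proof of Lemma \ref{lemma_positive_k} via (\ref{d_OB2})," and you have simply written out that analogous computation, including the correct role of the constant $1.3877$ in matching condition (\ref{condition_hyp}) to the definition of $H_k$.
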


\begin{proof}
Follows from an analogous proof of Lemma \ref{lemma_positive_k}, via equation (\ref{d_OB2}).
\end{proof}

%
%
%
%
%
%
%
%
%

\bibliographystyle{amsplain}



\end{document}